\let\mathcal\mathscr
\title{\sc Homological units}
\author{\sc Roland Abuaf \footnote{Imperial College London, South Kensington Campus, London SW7 2AZ, United-Kingdom and Institut des Hautes \'Etudes Scientifiques, 35 Route de Chartres, 91440 Bures-sur-Yvette, France. E-mail :\textit{r.abuaf@imperial.ac.uk, abuaf@ihes.fr}. Partially supported by the EPSRC programme grant EP/G06170X/1 and by the CARMIN programme grant.}}
\let\mathcal\mathscr
\newtheorem{theo}{Theorem}[subsection]
\newtheorem{exem}[theo]{Example}
\newtheorem{quest}[theo]{Question}
\newtheorem{defi}[theo]{Definition}
\newtheorem{cor}[theo]{Corollary}
\newtheorem{conj}[theo]{Conjecture}
\def\DB{\mathrm{D^{b}}}
\def\OO{\mathcal{O}}
\def\DP{\mathrm{D^{perf}}}
\def\Ri{\mathrm{R^{i}}}
\def\R0{\mathrm{R^{0}}}
\def\HH{\mathrm{HH}}
\def\Hh{\mathcal{H}om}
\def\HHH{\mathrm{Hom}}
\def\LL{\mathrm{\textbf{L}}}
\def\RR{\mathrm{\textbf{R}}}
\def\OO{\mathcal{O}}
\def\EE{\mathcal{E}}
\def\D{\mathcal{D}}
\def\M{\mathcal{M}}
\def\C{\mathcal{C}}
\def\F{\mathcal{F}}
\def\GG{\mathrm{G}}
\def\G{\mathcal{G}}
\def\T{\mathcal{T}}
\def\X{\mathcal{X}}
\newenvironment{proof}
{
\noindent
\textit{\underline{Proof}} :\\
$\blacktriangleright\;$%
}
{\hspace{\stretch{1}}%
$\blacktriangleleft$}
\begin{document}

\maketitle

\begin{abstract}

We define and study the invariance properties of homological units. Some applications are given to the derived invariance of Hodge numbers. In particular, we prove that if $X$ and $Y$ are derived equivalent smooth projective varieties of dimension $4$ having the same $h^{1,1}$, then they have the same Hodge numbers. We also give a geometric interpretation of the conjectural invariance of homological units in terms of derived Jacobians.
\end{abstract}

\vspace{\stretch{1}}

\newpage

\begin{section}{Introduction}

Let $X$ and $Y$ be smooth projective varieties over the field of complex numbers. If $X$ and $Y$ are derived equivalent, there Hochschild homology groups are isomorphic \cite{orlov2}. The Hochschild-Kostant-Rosenberg Theorem \cite{markarian} then implies the isomorphism of vector spaces:

\begin{equation*}
\bigoplus_{p-q=k} H^q(X, \Omega_X^p) \simeq \bigoplus_{p-q=k} H^q(Y, \Omega_Y^p).
\end{equation*}

It is still unknown if all the graded pieces which appear in the above decomposition are in fact isomorphic. Namely, the following conjecture is folklore:

\begin{conj} \label{conjhodge}
Let $X$ and $Y$ be smooth projective varieties. Assume that $X$ and $Y$ are derived equivalent, then $X$ and $Y$ have the same Hodge numbers. 
\end{conj}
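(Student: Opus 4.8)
The plan is to turn the problem into linear algebra on the Hodge diamond and to solve it using every numerical derived invariant at hand. From Hochschild homology and the Hochschild--Kostant--Rosenberg isomorphism I already have the anti-diagonal sums $s_k := \sum_{p-q=k} h^{p,q}$, where $h^{p,q} = \dim H^q(X,\Omega_X^p)$; from the derived invariance of Hochschild cohomology together with $\HH^m(X) \simeq \bigoplus_{p+q=m} H^q(X,\wedge^p\T_X)$ I also get the sums $t_m := \sum_{p+q=m}\dim H^q(X,\wedge^p\T_X)$. To these I adjoin the two classical symmetries of the diamond, Hodge symmetry $h^{p,q}=h^{q,p}$ and Serre duality $h^{p,q}=h^{n-p,n-q}$ with $n=\dim X$, which cut the number of independent unknowns roughly by a factor of four.

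The decisive extra ingredient is the \emph{edge} of the diamond, namely the groups $H^q(X,\OO_X)$, which I would recover intrinsically from $\DB(X)$ as its homological units. Granting that these units are a derived invariant, the whole $p=0$ column $h^{0,q}$ is preserved, and hence by the symmetries so are the $p=n$ column and the $q=0,n$ rows. I would then simply solve the resulting system. For $n\le 3$ the edge, the sums $s_k$ and the symmetries together force every $h^{p,q}$: e.g.\ for a threefold $s_0$ gives $h^{1,1}$ while $s_1$, once $h^{1,0}=h^{0,1}$ is known from the edge, gives $h^{2,1}$, and nothing else is free. This reproves the conjecture in dimension at most three.

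For $n=4$ the same bookkeeping determines all Hodge numbers except one. Indeed, after using the edge and the symmetries, $s_1$ and $s_2$ pin down $h^{1,2}$ and $h^{1,3}$, while $s_0$ yields only the combination $2h^{1,1}+h^{2,2}$; a single degree of freedom survives, carried by $h^{1,1}$. Specifying $h^{1,1}$ removes it and forces $h^{2,2}$, which is exactly the dimension-four statement announced in the abstract.

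The hard part is twofold, and it is where the conjecture remains open. First, the derived invariance of the homological units that furnishes the edge is itself only conjectural; establishing it is the principal obstacle, and I would attack it through the module structure of Hochschild homology over the homological units and its geometric model via derived Jacobians. Second, even granting that invariance, for $n\ge 4$ genuinely interior parameters such as $h^{1,1}$ are not seen by any of the Hochschild data --- the twisted numbers $t_m$ are honestly different invariants and do not feed back into the $h^{p,q}$ once $\w_X$ is nontrivial --- so controlling them in arbitrary dimension would require a further, finer invariant beyond those assembled here.
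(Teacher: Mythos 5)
The statement you were asked about is a \emph{conjecture}, and the paper does not prove it either: it remains open in dimension $\geq 4$. Your proposal is honest about this, and the strategy you sketch --- combine the HKR anti-diagonal sums $s_k$ with Hodge symmetry and Serre duality, adjoin the derived invariance of the ``edge'' $H^{\bullet}(X,\OO_X)$ (the homological units), and solve the resulting linear system on the Hodge diamond --- is exactly the paper's route to its partial results. Your bookkeeping is correct and reproduces the paper's Corollary: in dimension $3$ the edge plus the $s_k$ and the symmetries force all $h^{p,q}$ (indeed $s_1 = 2h^{1,0}+h^{2,1}$, so the edge is genuinely needed there), while in dimension $4$ everything is forced except the single relation $s_0 = 2 + 2h^{1,1}+h^{2,2}$, so fixing $h^{1,1}$ fixes $h^{2,2}$.

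The one genuine gap, relative to what the paper actually establishes, is your treatment of the edge. You grant its derived invariance as a hypothesis and later declare it ``only conjectural''; but the paper proves it \emph{unconditionally} in dimension $\leq 4$ (item 5 of Theorem \ref{douce}): given a Fourier--Mukai equivalence $\Phi : \DB(X) \simeq \DB(Y)$, one shows there exists a line bundle $L$ on $X$ with $\mathrm{rank}\,\Phi(L) \neq 0$. The argument is by contradiction via Grothendieck--Riemann--Roch: if no such $L$ existed, then all products $c_1(L_1)^{k_1}\cdots c_1(L_p)^{k_p}\cdot\left(\mathrm{ch}(\Phi^{-1}(\mathbb{C}(y)))\cdot td(X)\right)_{2\dim X-2k}$ would vanish; coincidence of homological and numerical equivalence for divisors and curves kills all components except the middle one, and the middle component is killed by the Hodge--Riemann bilinear relations, forcing $\mathrm{ch}(\Phi^{-1}(\mathbb{C}(y)))=0$, which is absurd since $\Phi$ induces an isomorphism on cohomology. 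The trace-map splitting for objects of non-vanishing rank then yields the graded algebra isomorphism $H^{\bullet}(X,\OO_X) \simeq H^{\bullet}(Y,\OO_Y)$. Supplying this argument would turn your conditional dimension $\leq 3$ and dimension $4$ conclusions into the unconditional ones stated in the paper; as written, your proposal proves strictly less than the paper's own partial results. And of course neither you nor the paper proves the conjecture itself: the interior numbers $h^{1,1}$, $h^{2,2}$ in dimension $4$ (and more of them in higher dimensions) remain out of reach, just as you say.
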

Note that if $X$ (or $Y$) has ample or anti-ample canonical bundle, a famous result of Bondal and Orlov states that derived equivalence of $X$ and $Y$ implies that both varieties are actually isomorphic \cite{BO}. Hence, conjecture \ref{conjhodge} is interesting when the sign of the canonical bundle of $X$ (and $Y$) is not definite. For instance if $X$ and $Y$ have trivial canonical bundle. 

\bigskip

In dimension $1$ and $2$ (and also in dimension $3$, if $X$ and $Y$ have trivial canonical bundle), this conjecture is an obvious consequence of the derived invariance of Hochschild homology, the Hochschild-Kostant-Rosenberg isomorphism and the Hodge symmetry. In dimension $3$, without any assumption on the canonical bundle, the conjecture is still true and has been settled by Popa and Schnell \cite{popa-schnell}.  If one assumes that the derived equivalence comes from a birational correspondence then the conjecture has been proved by Orlov \cite{orlov3}. In dimension $4$ or higher (and without the birational assumption), nothing is known, even if the canonical bundles of $X$ and $Y$ are trivial. 

\bigskip

Leaving aside this hard problem, one could be interested in a slightly less ambitious question:

\begin{quest}
Let $X$ and $Y$ be two smooth projective varieties. Assume that $X$ and $Y$ are derived equivalent. Denote by $\HH_{\bullet}(X)$ (resp. $\HH^{\bullet}(X)$), the Hochschild homology graded vector space (resp. Hochschild cohomology graded algebra) of $X$. 
\begin{itemize}
\item Are there non-trivial, canonically defined, sub-vector spaces of $\HH_{\bullet}(X)$ and $\HH_{\bullet}(Y)$ which are isomorphic?
\item Are there non-trivial, canonically defined, sub-algebras of $\HH^{\bullet}(X)$ and $\HH^{\bullet}(Y)$ which are isomorphic?
\end{itemize}
\end{quest}

The main goal of the present paper is to put forward some evidences that the graded algebras $H^{\bullet}(X, \OO_X)$ and $H^{\bullet}(Y,\OO_Y)$ are indeed isomorphic when $X$ and $Y$ are derived equivalent. Our main result shows that it is true for many examples of derived equivalences (see Theorem \ref{douce} and comments before for a more precise statement):

\begin{theo} \label{mainmain} Let $X$ and $Y$ be smooth projective varieties and $\Phi : \DB(X) \simeq \DB(Y)$ an equivalence of triangulated category. Assume that one of the following conditions holds:
\begin{enumerate}
\item The Fourier-Mukai kernel representing $\Phi$ is a (possibly shifted) generically pure vector bundle \footnote{In this statement,  a generically pure vector bundle on $X \times Y$ is an object on $X \times Y$ which is a pure vector bundle outside of a closed subset $Z \subset X \times Y$ such that $p(Z)$ and $q(Z)$ are strict closed subsets of $Y$ and $X$, where $q$ and $p$ are the obvious projections.} on $X \times Y$,

\item Both cohomology algebras $\bigoplus_{p=0}^{\dim X} H^p(X, \Omega^p_X), \bigoplus_{p=0}^{\dim Y} H^p(Y, \Omega^p_Y)$ are generated in degree $1$,

\item $X$ and $Y$ have dimension at most $4$,
\end{enumerate}
then there exists an isomorphism of graded algebras $H^{\bullet}(X, \OO_X) \simeq H^{\bullet}(Y, \OO_Y)$. 
\end{theo}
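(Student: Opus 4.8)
The plan is to reduce the statement to a question about a single, explicitly identified object. First I would record the purely formal observation that $H^\bullet(X,\OO_X)$ admits two useful descriptions as a graded algebra: it is the Yoneda algebra $\mathrm{Ext}^\bullet_{\DB(X)}(\OO_X,\OO_X)$, since $\mathrm{Ext}^i(\OO_X,\OO_X)=\mathrm{Hom}_{\DB(X)}(\OO_X,\OO_X[i])=H^i(X,\OO_X)$ with Yoneda composition equal to cup product; and it is the zeroth column ($p=0$) of the Hochschild--Kostant--Rosenberg decomposition $\HH^\bullet(X)\cong\bigoplus_{p,q}H^q(X,\wedge^p T_X)$, a graded subalgebra because the polyvector degree is additive. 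The correction relating $\HH^\bullet(X)$ to polyvector cohomology is triangular for the column grading, hence compatible with the increasing filtration $G_i=\bigoplus_{p\le i}H^q(X,\wedge^p T_X)$, whose bottom piece is $G_0=H^\bullet(X,\OO_X)$. Since any equivalence $\Phi:\DB(X)\xrightarrow{\sim}\DB(Y)$ induces an isomorphism of graded algebras $\mathrm{Ext}^\bullet_X(\OO_X,\OO_X)\cong\mathrm{Ext}^\bullet_Y(\Phi\OO_X,\Phi\OO_X)$, the theorem is equivalent, via the first description, to the single assertion that the image $E:=\Phi(\OO_X)$ of the tensor unit satisfies $\mathrm{Ext}^\bullet_Y(E,E)\cong H^\bullet(Y,\OO_Y)$ as graded algebras.

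Stated in these terms, the heart of the difficulty is that the abstract isomorphism $\HH^\bullet(X)\cong\HH^\bullet(Y)$ furnished by $\Phi$ respects only the total Hochschild degree, not the HKR bigrading, so there is no formal reason for it to carry the column filtration $G_\bullet$ of $X$ onto that of $Y$; concretely, $\Phi$ is not monoidal, so $E=\Phi(\OO_X)$ need not resemble $\OO_Y$ at all. Each hypothesis is a device for overcoming exactly this. Under hypothesis (1) I would compute $E$ directly: if the kernel $\P$ is a (possibly shifted) generically pure vector bundle then $E=\mathbf Rp_{Y*}\P$, and purity off a locus projecting to proper subvarieties makes the relevant base-change and degeneration arguments available, forcing $E$ to be, up to shift, a sheaf whose Yoneda algebra agrees with $H^\bullet(Y,\OO_Y)$; combined with the formal isomorphism above this closes the case.

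Under hypothesis (2) the generation of $\bigoplus_p H^p(X,\Omega^p_X)$ in degree one is what pins the column filtration down intrinsically: I would use it to characterize $G_0=H^\bullet(X,\OO_X)$ inside $\HH^\bullet(X)$ as the annihilator, for the action on Hochschild homology, of the subalgebra generated by the degree-one Hodge classes, a description that is manifestly transported by $\Phi$ because it refers only to the derived-invariant pair $(\HH^\bullet,\HH_\bullet)$ and its module structure. Under hypothesis (3) no such generation is available, and I would instead argue by rigidity in low dimension: the derived invariance of the graded vector space $\HH_\bullet$, together with Hodge symmetry and the cases already settled in dimension $\le 3$, constrains the individual numbers $h^{0,q}=\dim H^q(Y,\OO_Y)$ for $q\le 4$, after which one checks that a graded-commutative algebra concentrated in degrees $0,\dots,4$ with these dimensions and with the products inherited from the invariant $\HH^\bullet$-structure is determined up to isomorphism.

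The step I expect to be the genuine obstacle is precisely the non-invariance of the HKR column filtration: giving a characterization of the zeroth column $H^\bullet(\OO)$ that is intrinsic to the triangulated-plus-Hochschild data, and hence automatically preserved by $\Phi$. Hypotheses (1) and (2) supply such a characterization by rather different means, an explicit kernel computation versus an intrinsic annihilator description, whereas case (3) is the most delicate: there one has no structural foothold and must fall back on dimension counting together with the rigidity of graded-commutative algebras in low degree, so I would expect the bookkeeping of Hodge numbers under the constraints of $\HH_\bullet$-invariance to be where the real work lies.
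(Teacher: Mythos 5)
Your proposal is missing the mechanism that actually drives the paper's proof, and none of your three case analyses closes the resulting gap. The paper never computes, for any object, the full Yoneda algebra of its image under $\Phi$, and never directly identifies such an algebra with $H^{\bullet}(Y,\OO_Y)$. Its engine is the trace map: for any $\G\in\DB(Y)$ of \emph{non-vanishing rank}, the trace $\RR\Hh(\G,\G)\rightarrow\OO_Y$ splits, so $H^{\bullet}(Y,\OO_Y)$ embeds as a graded-algebra summand, split as vector spaces, inside $\mathrm{Ext}^{\bullet}_Y(\G,\G)$. Taking $\G=\Phi(L)$ for a line bundle $L$ gives $H^{\bullet}(Y,\OO_Y)$ as a summand of $\mathrm{Ext}^{\bullet}_Y(\Phi(L),\Phi(L))\cong\mathrm{Ext}^{\bullet}_X(L,L)=H^{\bullet}(X,\OO_X)$; running the symmetric argument through $\Phi^{-1}$ and comparing graded dimensions yields the isomorphism. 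Consequently each hypothesis is needed only to produce a line bundle $L$ on $X$ with $\mathrm{rank}\,\Phi(L)\neq 0$ (and symmetrically on $Y$). Your reduction to the single object $E=\Phi(\OO_X)$, with no splitting device, leaves you with nothing to prove the reduced statement: in case (1) your assertion that purity of the kernel "forces" $E$ to be a shifted sheaf whose Yoneda algebra is $H^{\bullet}(Y,\OO_Y)$ is unsupported (and the fixation on $\OO_X$ is itself too rigid --- the paper must twist by a large power $\OO_X(k)$ of an ample bundle, use relative ampleness of $q^*\OO_X(1)$ and the surjectivity of the adjunction map to show that rank-vanishing for all $k$ would confine the support of the kernel to $p^{-1}(U)$ for a strict open $U$, contradicting that $\Phi$ is an equivalence).

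The other two cases fail for more specific reasons. In case (2) your "annihilator" characterization is circular: it is phrased in terms of the degree-one Hodge classes $H^1(X,\Omega^1_X)\subset \HH_0(X)$, but the HKR column pieces of Hochschild homology are precisely what is \emph{not} known to be derived-invariant --- that is the folklore conjecture on Hodge numbers recalled in the introduction --- so the description is not "manifestly transported by $\Phi$"; moreover the characterization is false on its face, since the unit $1\in H^0(X,\OO_X)$ annihilates nothing. What generation in degree $1$ actually buys in the paper is the Lefschetz $(1,1)$ theorem, whence numerical and homological equivalence coincide; a Grothendieck--Riemann--Roch computation against all monomials $c_1(L_1)^{k_1}\cdots c_1(L_p)^{k_p}$ then forces $\mathrm{ch}(\Phi^{-1}(\mathbb{C}(y)))=0$ if every $\Phi(L)$ had rank zero, which is absurd. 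In case (3), the claim that invariance of $\HH_{\bullet}$ plus Hodge symmetry "constrains the individual numbers $h^{0,q}$" is wrong exactly where it matters: in dimension $4$ those data only give invariance of sums such as $h^{0,3}+h^{1,4}$ and $h^{0,2}+h^{1,3}+h^{2,4}$, never of $h^{0,q}$ separately. This is the genuinely hard point, and the paper settles it transcendentally: if every line bundle had rank-zero image, then $\left(\mathrm{ch}(\Phi^{-1}(\mathbb{C}(y)))\cdot td(X)\right)_4$ would be a non-zero primitive class, the Hodge--Riemann bilinear relations would make its self-intersection non-zero, and this contradicts the vanishing of the Euler-type pairing of skyscraper sheaves transported through $\Phi$. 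No bookkeeping replaces this input (the paper itself stresses that eliminating the transcendental argument is open). Finally, even granted equal dimensions $h^q(\OO_X)=h^q(\OO_Y)$, your last step fails: a graded-commutative algebra concentrated in degrees $0,\dots,4$ is not determined up to isomorphism by its Hilbert function, so without the trace-splitting argument you still would not obtain the asserted isomorphism of algebras.
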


As a consequence of assertion 3 in Theorem \ref{mainmain}, we get:

\begin{cor}
Let $X$ and $Y$ be smooth derived equivalent smooth projective varieties. 
\begin{itemize}
\item if $X$ and $Y$ have dimension $3$, then all their Hodge numbers are the same.
\item if $X$ and $Y$ have dimension $4$ and the same $h^{1,1}$, then all their Hodge numbers are the same
\end{itemize}
\end{cor}

The sub-algebra $H^{\bullet}(X, \OO_X)$ plays a crucial role in \cite{abuaf-HK1, abuaf-HK2} for the definition of hyper-K\"ahler categories. In fact, I show in the first section of the present paper that an algebra having similar properties to $H^{\bullet}(X, \OO_X)$ can be defined for a large class of triangulated categories of geometric origin. I call them \textbf{homological units}. As mentioned above, even in the strict geometric case (i.e. the derived category of a smooth projective variety), I am not able to prove, in full generality, that these homological units are derived invariants. 

The derived invariance of the homological units can be made into a geometric statement using derived algebraic geometry. Namely, if $X$ is a smooth projective variety, I denote by $Pic^0(X)_{dg}$ the connected component of $\OO_X$ in the derived moduli stack of objects in $\DP(X)$ (see \cite{TV}). The derived stack $Pic^0(X)_{dg}$ is the derived Jacobian of $X$. I conjecture the following:

\begin{conj} \label{derivedjacobian}
Let $X$ and $Y$ be smooth projective varieties. Assume that $\DB(X) \simeq \DB(Y)$, then $Pic^0(X)_{dg}$ is isogeneous to $Pic^0(Y)_{dg}$.
\end{conj}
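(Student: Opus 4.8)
The plan is to read Conjecture \ref{derivedjacobian} as the geometric shadow of the (conjectural) derived invariance of homological units, to translate the isogeny statement into an infinitesimal one at the distinguished point $\OO_X$, and then to integrate that infinitesimal statement back to a genuine isogeny of derived group stacks. First I would compute the tangent complex of the derived Jacobian at its marked point. By the deformation theory of the derived moduli stack of objects in $\DP(X)$ (Toën--Vaquié, \cite{TV}), the tangent complex of $Pic^0(X)_{dg}$ at $\OO_X$ is
\begin{equation*}
\mathbb{T}_{\OO_X} Pic^0(X)_{dg} \;\simeq\; \RR\HHH_X(\OO_X,\OO_X)[1] \;\simeq\; \RR\Gamma(X,\OO_X)[1] \;\simeq\; \bigoplus_{i\geq 0} H^i(X,\OO_X)[1-i],
\end{equation*}
and the cup product promotes $\RR\Gamma(X,\OO_X)$ to a dg- (equivalently $A_\infty$-) algebra whose underlying graded algebra is precisely the homological unit $H^{\bullet}(X,\OO_X)$. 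The classical truncation $\tau_{\leq 0}$ recovers the ordinary Picard variety $Pic^0(X)$, with $\mathrm{Lie}(Pic^0(X)) = H^1(X,\OO_X)$ sitting in degree $0$, the scalars $H^0(X,\OO_X)$ in degree $-1$ recording the stacky automorphisms, and $H^{\geq 2}(X,\OO_X)$ encoding the genuinely derived directions.

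Next I would reduce the desired isogeny to this infinitesimal data, splitting the contribution into a classical and a derived part. On the classical level one needs an isogeny of the truncations $Pic^0(X)\sim Pic^0(Y)$: the equality of dimensions $h^{0,1}(X)=h^{0,1}(Y)$ is known \cite{popa-schnell}, and I would leverage the analysis of the connected component of the group of autoequivalences to upgrade this to an isogeny of abelian varieties. On the derived level I would invoke the guiding principle that, in characteristic zero, a derived abelian group stack in the sense of \cite{TV} is rigid over its classical truncation, so that an isogeny of such stacks is determined by, and can be reconstructed from, an isomorphism of tangent complexes (as shifted $A_\infty$-algebras) that is compatible with a classical isogeny of the truncations. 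Granting such a reconstruction statement, the whole problem collapses to producing the required isomorphism of tangent complexes.

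The essential input is then exactly the derived invariance of the homological unit. The equivalence $\Phi : \DB(X) \simeq \DB(Y)$ need not carry $\OO_X$ to $\OO_Y$, but the $A_\infty$-refinement of the graded isomorphism $H^{\bullet}(X,\OO_X)\simeq H^{\bullet}(Y,\OO_Y)$ supplies precisely the isomorphism of tangent complexes demanded in the previous paragraph. In the cases where Theorem \ref{mainmain} applies — a generically pure Fourier--Mukai kernel, cohomology algebras generated in degree one, or dimension at most four — that graded isomorphism is available, and feeding its (conjectural) $A_\infty$-enhancement into the reconstruction would yield the isogeny $Pic^0(X)_{dg}\sim Pic^0(Y)_{dg}$ in each of those ranges.

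The main obstacles are twofold. First, the conjecture contains the open problem of the derived invariance of homological units as a special case: in full generality one does not even have the graded-algebra isomorphism $H^{\bullet}(X,\OO_X)\simeq H^{\bullet}(Y,\OO_Y)$, and the argument above in fact requires the strictly stronger $A_\infty$-enhancement of that isomorphism, which is not supplied by Theorem \ref{mainmain} as stated. Second, and genuinely new beyond the units, one must formulate and prove a derived analogue of the classical dictionary ``isogeny of abelian varieties $\Longleftrightarrow$ isomorphism of rational tangent data''. Integrating the infinitesimal isomorphism of tangent complexes to an honest isogeny of derived stacks requires controlling the periods — the interplay between the formal/derived structure and the underlying lattice — and even pinning down the correct notion of isogeny for $Pic^0(\cdot)_{dg}$ is delicate. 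For these reasons I expect the conjecture to be provable unconditionally only within the scope of Theorem \ref{mainmain}, with the full statement remaining open precisely because it is at least as hard as the invariance of homological units itself.
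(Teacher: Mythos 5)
Your proposal is honest that it does not close the conjecture, and you correctly identify that the full statement is at least as hard as the derived invariance of homological units (the paper only proves the conjecture for $\dim \leq 4$, in Theorem \ref{jacjac}). But there is a genuine gap in your strategy even within that range, and it is precisely the step you flag as an "obstacle": the reconstruction principle. You propose to go from an isomorphism of tangent complexes (an $A_\infty$-enhanced isomorphism of homological units, compatible with a classical isogeny of truncations) back to an isogeny of derived stacks. No such integration theorem is available, and it is not plausible as stated: already classically, abelian varieties are not determined by their tangent data, and an isogeny cannot be rebuilt from a Lie-algebra isomorphism without producing an actual map of group schemes. Your plan therefore needs two inputs that do not exist — the $A_\infty$-enhancement of the unit isomorphism (Theorem \ref{mainmain} only gives a graded-algebra isomorphism) and the infinitesimal-to-global dictionary — so it would not prove the conjecture even in dimension $\leq 4$.

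The paper's proof of the $\dim \leq 4$ case avoids both difficulties by reversing the logic: it first writes down an explicit map of derived stacks and only then checks étaleness on tangent complexes. Concretely, item $(5)$ of Theorem \ref{douce} produces a line bundle $L_0 \in Pic(X)$ with $\mathrm{rank}\,\Phi(L_0) \neq 0$, and one defines $f : Pic^0(Y)_{dg} \rightarrow \Phi(Pic^{0,L_0}(X)_{dg})$ by $f(M) = \Phi(L_0) \otimes M$; this is well-defined because $\Phi(Pic^{0,L_0}(X)_{dg})$ is a connected component of $DPerf(Y)$ and tensoring is intrinsic to $Y$. The tangent map of $f$ at $M$ is the shifted dual of the trace map $\mathrm{Ext}_Y^{\bullet}(\Phi(L_0)\otimes M, \Phi(L_0)\otimes M) \rightarrow \mathrm{Ext}_Y^{\bullet}(M,M)$; the nonvanishing rank makes the trace split (injectivity on cohomology), and the dimension equality coming from Theorem \ref{douce}(5) upgrades the injection to a quasi-isomorphism, so $f$ is étale in the derived sense. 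The classical truncation is an isogeny by part $(1)$ of Theorem \ref{jacjac} (itself proved by an elementary finite-stabilizer argument on moduli of simple perfect complexes, not by citing Rouquier as you suggest). No $A_\infty$-data and no reconstruction principle are ever needed: the graded (indeed, merely dimension-level) statement suffices once the map exists. This is the idea missing from your proposal, and it is what makes the partial result a theorem rather than a reduction of one conjecture to two others.
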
 

Popa and Schnell proved the un-derived version of this conjecture (\cite{popa-schnell}) and they use it to prove the derived invariance of Hodge numbers in dimension $3$. Granted the derived invariance of homological units in dimension less or equal to 4, I can prove the following:

\begin{theo}
Conjecture \ref{derivedjacobian} is true in dimension less or equal to 4.
\end{theo}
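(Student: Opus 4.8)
The plan is to combine two inputs: the un-derived isogeny statement of Popa and Schnell \cite{popa-schnell} for the classical Picard varieties, and the derived invariance of homological units supplied by assertion 3 of Theorem \ref{mainmain}, which controls the derived (i.e. non-classical) structure of the derived Jacobian.

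First I would recall the deformation-theoretic description of $Pic^0(X)_{dg}$. Being the connected component of $\OO_X$ in the derived moduli stack of objects of $\DP(X)$, its tangent complex at the origin is $\RR\HHH(\OO_X,\OO_X)[1]\simeq \mathbb{R}\Gamma(X,\OO_X)[1]$, so that $H^i(X,\OO_X)$ sits in cohomological degree $i-1$. The degree $0$ piece $H^1(X,\OO_X)$ is the Lie algebra of the classical truncation, which is exactly the abelian variety $Pic^0(X)$ of dimension $h^{0,1}(X)$; the pieces $H^i(X,\OO_X)$ for $i\geq 2$ account for the derived thickening. The crucial structural observation is that the differential graded Lie algebra governing this deformation problem is $\mathbb{R}\Gamma(X,\OO_X)$ with the \emph{commutator} bracket, which vanishes identically because $\mathbb{R}\Gamma(X,\OO_X)$ is a commutative dg-algebra. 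Hence the governing $L_\infty$-algebra is abelian and formal, the associated Chevalley--Eilenberg differential vanishes, and the derived deformation theory is linear: the formal germ of $Pic^0(X)_{dg}$ at the origin depends only on the graded vector space $H^{\bullet}(X,\OO_X)$, and by homogeneity the same holds at every closed point. I would package this as the statement that, up to isogeny, $Pic^0(X)_{dg}$ is reconstructed from the pair consisting of the abelian variety $Pic^0(X)$ and the graded piece $\bigoplus_{i\geq 2}H^i(X,\OO_X)$ governing the derived directions.

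Granted this description, the proof proceeds in two matching steps. On the classical part, Popa and Schnell \cite{popa-schnell} give an isogeny $Pic^0(X)\sim Pic^0(Y)$ of abelian varieties. On the derived part, assertion 3 of Theorem \ref{mainmain} provides, in dimension at most $4$, an isomorphism of graded algebras $H^{\bullet}(X,\OO_X)\simeq H^{\bullet}(Y,\OO_Y)$; in particular $h^{0,i}(X)=h^{0,i}(Y)$ for every $i$, so the graded spaces controlling the derived thickenings of $Pic^0(X)_{dg}$ and $Pic^0(Y)_{dg}$ coincide. It is exactly this matching of the higher numbers $h^{0,i}$ --- which is not a formal consequence of Hochschild homology and is the content of the derived invariance of homological units --- that is unavailable in higher dimension and makes the dimension hypothesis enter. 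I further expect that the cup-product pairings $H^1\otimes H^1\to H^2$, and their higher analogues, govern precisely how the derived directions integrate over $A=Pic^0(X)$, so that the full algebra isomorphism of Theorem \ref{mainmain}, and not merely the equality of the $h^{0,i}$, is the natural input; assembling the isogeny of classical parts with the identification of the derived thickenings then yields an isogeny $Pic^0(X)_{dg}\sim Pic^0(Y)_{dg}$.

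The main obstacle is the assembly step, and it is of two kinds. First, one must justify rigorously that a commutative derived group stack whose classical truncation is an abelian variety is determined up to isogeny by that abelian variety together with its (abelian, formal) tangent complex at the origin; the vanishing of the bracket makes the derived structure sheaf a trivial, square-zero type thickening, but turning this into an honest isogeny of derived group stacks requires care with the global integration of the higher additive directions over the abelian variety. Second, and more delicately, one must ensure compatibility: the isomorphism $H^1(X,\OO_X)\simeq H^1(Y,\OO_Y)$ induced on Lie algebras by the Popa--Schnell isogeny must agree with the degree $0$ part of the algebra isomorphism of Theorem \ref{mainmain}, so that the two identifications glue into a single morphism of derived stacks. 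Both isomorphisms ultimately originate from the fixed equivalence $\Phi$ acting on $\HH_{\bullet}$ and on the Picard variety, and I expect this common origin to reconcile them; verifying this compatibility is the crux of the argument.
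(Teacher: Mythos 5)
Your proposal does not follow the paper's route, and it contains a genuine gap --- one that you yourself flag as the ``main obstacle'' but do not close. In this paper an isogeny of derived Jacobians is, by definition, a property of a \emph{morphism} of derived stacks: its $0$-th truncation must be an isogeny of abelian varieties and the morphism must be \'etale in the derived sense. Your argument never constructs such a morphism. Instead you try to match invariants (the Popa--Schnell isogeny $Pic^0(X)\sim Pic^0(Y)$ on classical parts, and the abstract graded-algebra isomorphism $H^{\bullet}(\OO_X)\simeq H^{\bullet}(\OO_Y)$ from Theorem \ref{mainmain}) and appeal to an unproved reconstruction principle: that a commutative derived group stack whose tangent Lie algebra is abelian is determined up to isogeny by its classical truncation together with $\bigoplus_{i\geq 2}H^i(\OO_X)$. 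That principle is precisely the missing content; and the compatibility problem you raise (does the Lie-algebra identification induced by the classical isogeny agree with the degree-one piece of the algebra isomorphism?) is real and cannot be waved away by their ``common origin'' in $\Phi$ without an actual construction.

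The paper dissolves both difficulties simultaneously by exhibiting one explicit morphism and checking both conditions on it. The key input is not the abstract isomorphism of homological units but the stronger statement underlying it, namely item $(5)$ of Theorem \ref{douce}: in dimension at most $4$ there exists a line bundle $L_0 \in Pic(X)$ with $\mathrm{rank}\,\Phi(L_0) \neq 0$. One then defines $f : Pic^0(Y)_{dg} \rightarrow \Phi(Pic^{0,L_0}(X)_{dg})$ by $f(M) = \Phi(L_0)\otimes M$. Its tangent map at $M$ is the shifted dual of the trace map $\mathrm{Ext}^{\bullet}_Y(\Phi(L_0)\otimes M, \Phi(L_0)\otimes M) \rightarrow \mathrm{Ext}^{\bullet}_Y(M,M)$; since $\Phi(L_0)\otimes M$ has nonzero rank the trace splits, so $T_f$ is injective on cohomology, and the equality of dimensions supplied by the homological-unit invariance upgrades this injection to a quasi-isomorphism, i.e. $f$ is \'etale. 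The $0$-th truncation of this \emph{same} map $f$ is an isogeny of abelian varieties by the determinant/finite-stabilizer argument of part $(1)$ of Theorem \ref{jacjac} (which the paper reproves rather than cites). Because a single morphism carries both the classical isogeny and the \'etale structure, your compatibility question never arises. To salvage your approach you would need to prove the global reconstruction statement for commutative derived group stacks (integrating the abelian tangent Lie algebra over the abelian variety functorially enough to glue with an isogeny of classical parts); the paper offers nothing in that direction, and the rank-nonvanishing statement is the input you should aim for instead, since it immediately hands you the required morphism.
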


\bigskip

\textbf{Acknowledgments:} I would like to thank Chris Brav and Laurent Manivel for interesting discussions and comments about the notion of homological units.

\end{section}

\begin{section}{Definition and derived invariance properties}

Let $X$ be a algebraic variety over $\mathbb{C}$ and let $\F \in \DB(X)$ be an object whose rank is not zero. Then the trace map:

\begin{equation*}
\mathrm{Tr} : \RR \Hh (\F,\F) \rightarrow \OO_X
\end{equation*}
splits and gives a splitting:

\begin{equation*}
 \HHH^{\bullet}(\F,\F) = H^{\bullet}(\OO_X) \oplus \HHH^{\bullet} (\F,\F)_0, 
\end{equation*}
where $\HHH^{\bullet} (\F,\F)_0$ is the vector space of trace-less endomorphisms. Hence, the algebra $H^{\bullet}(\OO_X)$ appears as a maximal direct factor of the endomorphisms algebra of any object in $\DB(X)$ which rank is not vanishing. The following is a categorical definition of what should be the avatar of $H^{\bullet}(\OO_X)$ in the non-commutative world.

\begin{defi} \label{homounit}
Let $\C$ be an abelian category with a non-trivial rank function and $\T$ be a full admissible subcategory in $\DB(\C)$. A graded algebra $\mathfrak{T}^{\bullet}$ is called a \textbf{homological unit} for $\T$ (with respect to $\C$), if $\mathfrak{T}^{\bullet}$ is maximal for the following properties : 
\begin{enumerate}

\item for any object $\F \in \T$, there exists a graded algebra morphism $\mathfrak{T}^{\bullet} \rightarrow  \HHH^{\bullet}(\F,\F)$ which is functorial in the following sense. Let $\F, \G \in \T$ and let $a \in \mathfrak{T}^{k}$ for some $k$. We denote by $a_{\F}$ (resp. $a_{\G}$) the image of $a$ in $\HHH^{k}(\F, \F)$ (resp. $\HHH^{k}(\G, \G)$). Then, for any morphism $\psi : \F \rightarrow \G$, there is a commutative diagram:
\begin{equation*}
\xymatrix{ \F \ar[rr]^{a_{\F}} \ar[dd]^{\psi} & &\F [k] \ar[dd]^{\psi[k]} \\
& &  \\
\G \ar[rr]^{a_{\G}} & & \G [k]}
\end{equation*}

\item for any $\F \in \T$ which rank (seen as an object in $\DB(\C)$) is not vanishing, the morphism $\mathfrak{T}^{\bullet} \rightarrow  \HHH^{\bullet}(\F,\F)$ is an injection of graded algebras, which splits as a morphism of vector spaces.

\end{enumerate}

With hypotheses as above, an object $\F \in \T$ is said to be \textup{unitary}, if $\HHH^{\bullet}(\F,\F) = \mathfrak{T}^{\bullet}$, where $\mathfrak{T}^{\bullet}$ is a homological unit for $\T$.

\end{defi}

Of course, one can not expect that all examples of homological units as defined above will be significant. In the main application of the present paper, one will look at $\C = Coh(X), Coh^{\GG}(X)$ or $Coh(X, \alpha)$, where $X$ is a smooth projective variety, $\GG$ an algebraic group acting on $X$, $\alpha$ a Brauer class on $X$ and the rank function will be the obvious one. However, it is well possible that many examples of homological units coming from Representation Theory will be discovered, so that it seems sensible to give a general definition that does not restrict to purely geometrical examples. 

Note also that the hypothesis of non-vanishing rank for the splitting is a technical condition which seems to be important. It would be very interesting to know if there are some non-trivial examples where the splitting occurs whatever the rank of the object.

\begin{exem}
\begin{enumerate}
\item Let $X$ be a smooth algebraic variety and $\alpha \in\mathrm{Br}(X)$, a class in the Brauer group of $X$. Consider $\C = Coh(X,\alpha)$, the category of coherent $\alpha$-twisted sheaves on $X$. One can define a rank function on $\C$ as being the rank of $\F$ when seen as an $\OO_X$-module. Then for any $\F \in \DB(\C)$, we have a trace map:

\begin{equation*}
\mathrm{Tr} : \RR \Hh_{\DB(\C)} (\F,\F) \rightarrow \OO_X
\end{equation*}

which splits when the rank of $\F$ is not zero. As a consequence, for all $\F \in \DP(\C)$, we have a graded algebra morphism:
\begin{equation*}
 H^{\bullet}(\OO_X) \rightarrow  \HHH^{\bullet}_{\DB(\C)}(\F,\F)
\end{equation*}
which splits (as a morphism of vector spaces) when the rank of $\F$ is not zero. The morphism $H^{\bullet}(\OO_X) \rightarrow  \HHH^{\bullet}_{\DB(\C)}(\F,\F)$ is given by $a \rightarrow id_{\F} \otimes a$, so that the functoriality property is clearly satisfied. Furthermore, if $L$ is a twisted line bundle in $\DB(Coh(X,\alpha))$, we have $\HHH^{\bullet}_{\DB(\C)}(L,L) = H^{\bullet}(\OO_X)$. Thus, $H^{\bullet}(\OO_X)$ is maximal for the properties required in Definition \ref{homounit} and it is a homological unit for $\C$.

\item Let $X$ be a smooth algebraic variety and $\GG$ be a finite group acting on $X$. For any $\F \in \DB(Coh^{\GG}(X))$, the trace map $\mathrm{Tr} : \RR \Hh (\F, \F) \rightarrow \OO_X$ is $\GG$-equivariant and it is split if the rank of $\F$ is non-zero. Hence, for all $\F \in \DB(Coh^{\GG}(X)) $, we have a graded algebra morphism:

\begin{equation*}
 H^{\bullet}(\OO_X)^{\GG} \rightarrow \HHH^{\bullet}_{\DB(Coh^{\GG}(X))}(\F, \F), 
\end{equation*}
which splits (as a morphism of vector spaces) when the rank of $\F$ is not zero. The morphism $H^{\bullet}(\OO_X)^{\GG} \rightarrow  \HHH^{\bullet}(\F,\F)$ is again given by $a \rightarrow id_{\F} \otimes a$, so that the functoriality property is also satisfied. Furthermore, if $L$ is a $\GG$-invariant line bundle on $X$, we have $\HHH^{\bullet}_{\DB(\C)}(L,L) = H^{\bullet}(\OO_X)^{\GG}$. Hence, the algebra $ H^{\bullet}(\OO_X)$ is maximal for the properties required in Definition \ref{homounit} and it is a homological unit for $\C$. This readily generalizes for any smooth Deligne-Mumford stack. Namely, if $\X$ is a smooth Deligne-Mumford stack, then $H^{\bullet}(\OO_{\X})$ is a homological unit for $\DB(\X)$. Note that all line bundles on $\X$ are unitary objects.
\end{enumerate} 
\end{exem}

\bigskip

By the maximality property required in the definition, the homological unit is unique (with respect to the embedding in the derived category of an abelian category) as soon as there exists a unitary object in $\T$ (and so being unitary is non-ambiguous with respect to the homological unit). This is for instance the case when $\C = Coh(X, \alpha)$ or $\C = Coh^{\GG}(X)$, as any (twisted/equivariant) line bundle is unitary. As for the independence with respect to the embedding in the derived category of an abelian category, the question seems to be more delicate.

\bigskip

Let $\C_1$ and $\C_2$ be two abelian categories with rank functions and let $\T$ be a full admissible subcategory of both $\DB(\C_1)$ and $\DB(\C_2)$. Let $\mathfrak{T}_1$ a homological unit for $\T$ with respect to $\C_1$ and $\mathfrak{T}_2$ a homological unit for $\T$ with respect to $\C_2$. Assume that there exists a unitary object in $\T$ with respect to $\C_1$ whose rank is not vanishing when seen as an object in $\DB(\C_2)$. Then, by definition, the algebra $\mathfrak{T}_2$ is a graded algebra direct summand of $\mathfrak{T}_1$. Assume furthermore that there exists a unitary object in $\T$ with respect to $\C_2$ whose rank is not vanishing as an object of $\DB(\C_1)$. Then there is graded algebra isomorphism $\mathfrak{T}_1 \simeq \mathfrak{T}_2$. As a consequence, the question of the independence of homological units with respect to the embedding in the derived category of an abelian category can be reduced to the following:

\begin{quest} \label{questunit}
Let $\C_1$ and $\C_2$ be two an abelian categories with rank functions and let $\T$ be a triangulated category which can be embedded as an admissible full subcategory of both $\DB(\C_1)$ and $\DB(\C_2)$. When is there a unitary object in $\T$ with respect to $\C_1$ (resp. $\C_2$) whose rank is not vanishing when seen as an object in $\DB(\C_2)$ (resp. $\DB(\C_1)$)?
\end{quest}

In the special case where $\T \simeq \DB(\C_1) \simeq \DB(C_2)$ and $\C_1$ and $\C_2$ are the categories of coherent sheaves on algebraic varieties, I can give a partial answer to \ref{questunit};

\begin{theo} \label{douce} Let $X$ and $Y$ be smooth projective varieties and $\Phi : \DB(X) \simeq \DB(Y)$ an equivalence of triangulated category. Assume that one of the following conditions holds:
\begin{enumerate}
\item The Fourier-Mukai kernel representing $\Phi$ a (possibly shifted) generically pure vector bundle on $X \times Y$,

\item The $i$-th graded components of the Chern character of the kernel of $\Phi$ in $H^{\bullet}(X \times Y, \mathbb{C})$ are zero for $i=0 \cdots 2 \dim X-1$,

\item Both cohomology algebras $\bigoplus_{p=0}^{\dim X} H^p(X, \Omega^p_X), \bigoplus_{p=0}^{\dim Y} H^p(Y, \Omega^p_Y)$ are generated in degree $1$,

\item $X=Y$ is smooth and $\Phi$ is a spherical twist or a $\mathbb{P}^n$-twist,

\item $X$ and $Y$ have dimension at most $4$,
\end{enumerate}
then there exists a unitary object in $\F \in \DB(X)$ such that the rank of $\Phi(\F)$ is not zero and vice-versa. In particular, the homological unit of $\DB(X)$ is isomorphic to the homological unit of $\DB(Y)$.

\end{theo}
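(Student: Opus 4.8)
The plan is to reduce all five hypotheses to a single assertion: there is a line bundle $L$ on $X$ (line bundles are unitary objects for $Coh(X)$, by the examples above) whose image $\Phi(L)$ has non-zero rank in $\DB(Y)$, together with the symmetric statement for $\Phi^{-1}$. By the discussion preceding Question \ref{questunit}, once such objects are produced on both sides, $H^{\bullet}(\OO_X)$ and $H^{\bullet}(\OO_Y)$ become graded-algebra direct summands of each other, hence isomorphic. I would compute the rank through the Euler pairing: writing $\OO_y$ for a skyscraper on $Y$ and $P_y=\Phi^{-1}(\OO_y)$, Hirzebruch--Riemann--Roch gives $\mathrm{rank}(\Phi(\F))=\pm\,\chi(\OO_y,\Phi(\F))=\pm\,\chi(P_y,\F)=\pm\,\langle v(P_y),v(\F)\rangle$, where $v=\mathrm{ch}\sqrt{\mathrm{td}}$ and $\langle\,,\rangle$ is the Mukai pairing. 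The class $Q:=v(P_y)=(\Phi^H)^{-1}([\mathrm{pt}_Y])$ is independent of $y$, lies in $A_X:=\bigoplus_p H^p(X,\Omega^p_X)$, and is non-zero because the cohomological transform $\Phi^H$ is an isomorphism. Thus it suffices to show that $Q$ is not orthogonal to every $v(L)=e^{c_1(L)}\sqrt{\mathrm{td}_X}$.

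The computational hypotheses (1), (2), (4) I would treat directly. For (2) the K\"unneth components of $\mathrm{ch}(K)$ of total degree $<2\dim X$ vanish, so only $\mathrm{ch}(K)_{\dim X,0}\in H^{n,n}(X)\otimes H^0(Y)$ contributes, giving $\mathrm{rank}(\Phi(\F))=r\cdot\mathrm{rank}(\F)$ with $r=\int_X\mathrm{ch}(K)_{\dim X,0}$; since $\Phi$ is essentially surjective and hits rank-one objects, $r\neq0$, so every line bundle works and the inverse direction is automatic. For (1), restricting $K$ to a fibre $X\times\{y\}$ with $y$ outside the (proper closed) image of the bad locus yields an honest vector bundle $E_y$ of positive rank $\rho$; taking $L=H^{\otimes m}$ with $H$ ample and $m\gg0$, asymptotic Riemann--Roch gives $\mathrm{rank}(\Phi(L))=\pm\,\chi(X,L\otimes E_y)\sim\pm\,\rho\,(H^{\dim X})\,m^{\dim X}/(\dim X)!\neq0$, and the dual kernel is again generically a pure bundle, so the symmetric statement follows. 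For (4) the kernel is explicit: a spherical (resp. $\mathbb{P}^n$-) twist acts by $v(\Phi(\F))=v(\F)-\chi(E,\F)\,v(E)$, so $\mathrm{rank}(\Phi(L))=1-\chi(E,L)\,\mathrm{rank}(E)$ is a non-constant function of $c_1(L)$, non-zero for infinitely many $L$; as $X=Y$ the inverse twist is of the same type.

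For hypothesis (3) I would use that the Mukai pairing is non-degenerate on $A_X$ (Poincar\'e duality together with the Hodge--Riemann relations make $H^{p,p}(X)\times H^{n-p,n-p}(X)\to\mathbb{C}$ perfect). If every line bundle gave $\mathrm{rank}(\Phi(L))=0$, then $Q$ would be orthogonal to the span of the $v(L)$, that is to $\sqrt{\mathrm{td}_X}$ times the subalgebra generated by $c_1(\mathrm{Pic}(X))$; since $Q$ is algebraic and $\sqrt{\mathrm{td}_X}$ is an invertible algebraic class, generation of $A_X$ in degree $1$ is precisely what should force this span to fill the subspace pairing non-degenerately with $Q$, giving $Q=0$, a contradiction. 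Hypothesis (5) in dimension at most $3$ I would reduce to (3): by hard Lefschetz the ample class $\omega\in H^{1,1}$ gives isomorphisms $\omega^{\,n-2p}\colon H^{p,p}\xrightarrow{\sim}H^{n-p,n-p}$, from which one reads off that $A_X$ is generated in degree $1$ as soon as $n\le3$.

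The hard part will be the four-dimensional case of (5), where $A_X$ need not be generated in degree $1$: the middle piece $H^{2,2}(X)$ may contain classes that are not products of divisor classes, so the span argument can fail and the non-zero class $Q$ might a priori be orthogonal to every $v(L)$. Overcoming this should require controlling the possible Chern character of the kernel in dimension $4$ and exploiting the Hodge-theoretic constraints coming from the derived invariance of Hochschild homology (and Hodge symmetry) to prevent $Q$ from being concentrated in the non-divisorial part of $H^{2,2}$. A related subtlety, already visible in (3), is that the rank only ever tests $Q$ against \emph{algebraic} classes, so one must check that generation in degree $1$ passes to the algebraic subalgebra; reconciling the Hodge-theoretic hypothesis with this algebraic constraint is, I expect, the true crux of the argument.
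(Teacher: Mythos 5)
Your overall framework (produce line bundles on both sides whose images have non-zero rank, then conclude via the discussion preceding Question \ref{questunit}) is exactly the paper's, and your treatment of conditions (1), (2) and of spherical twists in (4) is correct. In fact, your fibre-restriction plus asymptotic Riemann--Roch argument for (1) is more direct than the paper's contradiction via relative ampleness and adjunction, and your observation that (2) yields $\mathrm{rank}(\Phi(\F))=r\cdot\mathrm{rank}(\F)$ with $r\neq 0$ (by essential surjectivity) disposes of the inverse direction more cleanly than the paper's remark about supports. One local error: for a $\mathbb{P}^n$-twist the formula $v(\Phi(\F))=v(\F)-\chi(E,\F)\,v(E)$ is false. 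The kernel $\EE$ sits in a triangle whose third term is $\OO_{\Delta}$ and whose first term is a cone on $E^{*}\boxtimes E[-2]\rightarrow E^{*}\boxtimes E$, so $\mathrm{ch}(\EE)=\mathrm{ch}(\OO_{\Delta})$ and the twist acts as the \emph{identity} on cohomology; this is precisely how the paper handles it (reduction to condition (2)). Your formula is the spherical one; the conclusion survives, but that step as written is wrong. Your hard-Lefschetz reduction of dimension $\leq 3$ to condition (3) is also correct and matches the paper's remark that those cases are easier.

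The genuine gap is where you yourself locate it: you prove neither (3) nor the four-dimensional case of (5), and the latter is the paper's main new result (it feeds the Hodge-number corollary and Theorem \ref{jacjac}). For (3), the paper closes your span argument by invoking the Lefschetz $(1,1)$ theorem to identify homological with numerical equivalence, so that orthogonality of the algebraic class $\alpha=\mathrm{ch}(\Phi^{-1}(\mathbb{C}(y)))\cdot \mathrm{td}(X)$ to all monomials in divisor classes already forces $\alpha=0$. For (5) with $\dim X=4$, the missing idea is not the one you propose (derived invariance of Hochschild homology, Hodge symmetry, or control of the kernel); it is a self-pairing trick against the Euler form. Exactly as in (3), one gets $\alpha_{2k}=0$ for $k=0,1,3,4$, because numerical and homological equivalence agree for divisors and $1$-cycles; the surviving middle component $\alpha_{4}\in H^{2,2}(X)$ is then \emph{primitive}, since its product with an ample class is a $1$-cycle class numerically equivalent to zero. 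The Hodge--Riemann bilinear relations therefore give $\int_X\alpha_{4}\cdot\alpha_{4}\neq 0$ whenever $\alpha_{4}\neq 0$. On the other hand, writing $P=\Phi^{-1}(\mathbb{C}(y))$, the vanishing of all other components of $\alpha$ shows that $\int_X\mathrm{ch}(\RR\Hh(P,\OO_X))\cdot\mathrm{ch}(P)\cdot\mathrm{td}(X)$ reduces exactly to $\int_X\alpha_{4}\cdot\alpha_{4}$; but by Grothendieck--Riemann--Roch this integral is the Euler pairing $\chi(P,P)$, which equals $\chi(\mathbb{C}(y),\mathbb{C}(y))=0$ because $\Phi$ is an equivalence. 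Hence $\alpha_{4}=0$, so $\mathrm{ch}(\Phi^{-1}(\mathbb{C}(y)))=0$, contradicting that $\Phi$ induces an isomorphism on cohomology. This combination of $\chi(P,P)=0$ with the Hodge--Riemann relations on the primitive part is the key step your proposal lacks, and it bypasses entirely the question (which rightly worries you) of whether $H^{2,2}$ is spanned by products of divisor classes.
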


In the first condition, a generically pure vector bundle on $X \times Y$ is an object on $X \times Y$ which is a pure vector bundle outside of a closed subset $Z \subset X \times Y$ such that $p(Z)$ and $q(Z)$ are strict closed subsets of $Y$ and $X$, where $p$ and $q$ are the obvious projections.

\bigskip

\begin{proof}

1. Let me start proving that the first condition is sufficient for the existence of a unitary object $\F \in \DB(X)$ (resp. $\G \in \DB(Y)$) such that the rank of $\Phi(\F)$ (resp. $\Phi^{-1}(\G)$) is not zero. Let $\EE$ be an object on $X \times Y$ which represents the kernel of $\Phi$. By hypothesis, the object $\EE$ is a generically pure (possibly shifted) vector bundle. So, up to shift, we can assume that $\EE$ is a generically pure vector bundle concentrated in degree $0$. We have a diagram:

\begin{equation*}
\xymatrix{ & &  \ar[lldd]_{q} X \times Y \ar[rrdd]^{p} & &  \\
& & & & \\
X & & & & Y}
\end{equation*}

and $\Phi$ is given by $\RR p_*(\LL q^*(?) \otimes \EE) : \DB(X) \rightarrow \DB(Y)$. Let $\OO_{X}(1)$ be an ample line bundle on $X$. We will prove that there exists $k>0$ such that $\RR p_*(\LL q^*( \OO_X(k)) \otimes \EE)$ has non-vanishing rank, as an object in $\DB(Y)$. We proceed by contradiction. Assume that for all $k>0$, the object $\RR p_*(\LL q^*( \OO_X(k)) \otimes \EE)$ has rank $0$. Since $q^* \OO_{X}(1)$ is relatively ample for $p$ and $\EE$ is a generically pure vector bundle, we have:

\begin{equation*}
\operatorname{rank}(\Ri p_*(q^* \OO_{X}(k) \otimes \EE)) = 0,
\end{equation*}
for all $i>0$ and all $k$ big enough. Hence the hypothesis that the object $\RR p_*(\LL q^*\OO_X(k) \otimes \EE)$ has rank $0$ for all $k>0$ implies that the object $p_* (q^* \OO_{X}(k) \otimes \EE)$ has rank $0$, for all $k$ big enough. Since $\EE$ is a generically pure vector bundle, this shows that the support of $p_* (q^* \OO_{X}(k) \otimes \EE)$ is a strict subvariety of $Y$, for all $k$ big enough. Let $U$ be a strict open of $Y$ which contains $\mathrm{Supp} \left( p_* (q^* \OO_{X}(k) \otimes \EE) \right)$ for all $k$ big enough. Since $q^* \OO_{X}(1)$ is relatively ample for $p$, the adjunction morphism:

\begin{equation*}
p^* p_* (q^* \OO_{X}(k) \otimes \EE) \rightarrow q^* \OO_{X}(k) \otimes \EE
\end{equation*}
is surjective for $k$ big enough. This implies that the support of $\EE$ is contained in $p^{-1}(U)$. In particular, the image of $\Phi$ is included in $\D^b(U)$ and $\Phi$ is not an equivalence. This is a contradiction.

Hence we have proved that there exists $k>0$ such that the rank of $\RR p_* (q^* \OO_{X}(k) \otimes \EE)$ is not zero. 
\bigskip

To prove that there exists a unitary object $\G$ in $\DB(Y)$ such that the rank of $\Phi^{-1}(\G) \in \DB(X)$ is not zero, we notice that $\Phi^{-1}$ is given by : $\RR q_* (\LL p^*(?) \otimes \RR \Hh(\EE,p^* \omega_Y))$. But $\EE$ is a (shifted) generically pure vector bundle on $X \times Y$, so that $\RR \Hh(\EE,p^* \omega_Y))$ is also a (shifted) generically pure vector bundle. As a consequence, the functor : $\RR q_* (\LL p^*(?) \otimes \RR \Hh(\EE,p^* \omega_Y)$ is an equivalence going from $\DB(Y)$ to $\DB(X)$ whose kernel is a (possibly shifted) generically pure vector bundle. The same argument as before shows that there exists $\G \in \DB(Y)$ such that the rank of $\Phi^{-1}(\G) \in \DB(X)$ is not zero.

\bigskip
\bigskip

2. We focus now on the second condition. Let $\EE$ be the kernel of a Fourier-Mukai equivalence between $X$ and $Y$. We will prove that the image of $\OO_X$ has non-vanishing rank. We again proceed by absurd. Assume that the rank of $\RR p_* \EE$ is zero. Since $Y$ is smooth and projective, the object $\RR p_* \EE$ is quasi-isomorphic to a bounded complex of vector bundles. Hence the condition $\mathrm{rank}(\RR p_* \EE) = 0$ is equivalent to $\chi(\RR p_* \EE \otimes \mathbb{C}(y)) =0$, for all $y \in Y$. By the projection formula and the Leray spectral sequence, this implies that for any $y \in Y$, we have:

\begin{equation*}
\chi(\LL j_y^* \EE)=0
\end{equation*}
where $j_y : X \times y \hookrightarrow X \times Y$ is the embedding of the fiber of $p$ over $y$ in $X \times Y$. By the Grothendieck-Rieman-Roch formula, this vanishing is equivalent ot the vanishing:

\begin{equation*}
\int_{X} \mathrm{ch}(\LL j_y^* \EE).td(X) = 0.
\end{equation*}

But the $i$-th graded components of $\mathrm{ch}(\EE) \in H^{\bullet}(X \times Y, \mathbb{C})$ are zero for $i=0 \cdots 2\dim X-1$ and have the compatibility condition : $j_y^* \mathrm{ch}(\EE)= \mathrm{ch}(\LL j_y^* \EE)$. As a consequence, we find that the $i$-th graded components of $\mathrm{ch}(\LL j_y^* \EE)$ are zero for $i=0 \cdots 2 \dim X -1$. The vanishing $\int_{X} \mathrm{ch}(\LL j_y^* \EE).td(X) = 0$ hence implies that $\mathrm{ch}(\LL j_y^* \EE)_{2 \dim X} = 0$, which finally proves that $\mathrm{ch}(\LL j_y^* \EE) = 0$, for all $y \in Y$. Since $ \LL j_y^* \EE \simeq \Phi(\mathbb{C}(y))$ and $\Phi$ is an equivalence, this proves that $\mathrm{ch}(\mathbb{C}(y)) = 0$. Contradiction.

\bigskip

The kernel giving the inverse of $\Phi$ has the same support as $\EE$, so that the above argument also applies to $\Phi^{-1}$.

\bigskip
\bigskip
 
3. Let me turn to the third condition. We want to prove that there exists $L \in \mathrm{Pic}(X)$, such that the rank of $\Phi(L)$ as an object in $\DB(Y)$ is not zero. One again we proceed by absurd. Assume that for all $L \in \mathrm{Pic}(X)$, the object $\phi(L)$ has rank $0$. Since $Y$ is smooth projective, the vanishing of $\mathrm{rank}(\Phi(L))$ is equivalent to $\chi(\Phi(L) \otimes \mathbb{C}(y)) = 0$, for any point $y \in Y$. This implies that $\chi(L \otimes \Phi^{-1}(\mathbb{C}(y)) = 0$, for any $y \in Y$ and any $L \in \mathrm{Pic}(X)$. In particular, for any $p \geq 1$, any $L_1,\cdots,L_p \in \mathrm{Pic}(X)$ and any $m_1, \cdots, m_p \in \mathbb{Z}$, we have:

\begin{equation*}
\chi(L_1^{\otimes m_1} \otimes \cdots \otimes L_p^{\otimes m_p} \otimes \Phi^{-1}(\mathbb{C}(y))) = 0.
\end{equation*}
By the Grothendieck-Riemann-Roch formula, this is equivalent to:

\begin{equation*}
\int_{X} \mathrm{ch}(L_1^{\otimes m_1} \otimes \cdots \otimes L_p^{\otimes m_p}).\mathrm{ch}(\Phi^{-1}(\mathbb{C}(y))).td(X) = 0,
\end{equation*}

for all $p \geq 1$, all $L_1,\cdots,L_p \in \mathrm{Pic(X)}$, all $m_1,\cdots,m_p \in \mathbb{Z}$ and all $y \in Y$. Let me prove that this implies $\mathrm{ch}(\Phi^{-1}(\mathbb{C}(y))) = 0$. Indeed, if $\mathrm{ch}(L_1^{\otimes m_1} \otimes \cdots \otimes L_p^{\otimes m_p})_k$ is the component of $\mathrm{ch}(L_1^{\otimes m_1} \otimes \cdots \otimes L_p^{\otimes m_p})$ in $H^k(X,\mathbb{C})$, then we have:
\begin{equation*}
\begin{split}
\mathrm{ch}(L_1^{\otimes m_1} \otimes \cdots \otimes L_p^{\otimes m_p})_k & =  (m_1c_1(L_1)+ \cdots +m_pc_1(L_p))^k \\
& = \sum_{k_1+ \cdots + k_p =k}  \frac{k!}{k_1! \cdots k_p!} m_1^{k_1} \cdots m_p^{k_p} c_1(L_1)^{k_1} \cdots c_1(L_p)^{k_p}
\end{split}
\end{equation*}

As a consequence, the equation:
\begin{equation*}
\int_{X} \mathrm{ch}(L_1^{\otimes m_1} \otimes \cdots \otimes L_p^{\otimes m_p}).\mathrm{ch}(\Phi^{-1}(\mathbb{C}(y))).td(X) = 0,
\end{equation*}

reads:

\begin{equation*}
\sum_{k=0}^{\dim X} \sum_{k_1+ \cdots + k_p =k}  \frac{k!}{k_1! \cdots k_p!} m_1^{k_1} \cdots m_p^{k_p} c_1(L_1)^{k_1} \cdots c_1(L_p)^{k_p}. \mathrm{ch}(\left(\Phi^{-1}(\mathbb{C}(y))).td(X) \right)_{2\dim X - 2k} = 0,
\end{equation*}
for all $m_1, \cdots m_p \in \mathbb{Z}$. In particular, we get:

\begin{equation*}
 c_1(L_1)^{k_1} \cdots c_1(L_p)^{k_p}. \left( \mathrm{ch}(\Phi^{-1}(\mathbb{C}(y))).td(X) \right)_{2\dim X - 2k}=0,
\end{equation*}

for all $L_1, \cdots L_p \in \mathrm{Pic}(X)$, all $k_1, \cdots k_p \in \mathbb{N}$ such that $k_1+ \cdots k_p = k$ and all $0 \leq k \leq \dim X$. By hypothesis, the cohomology algebra $\bigoplus_{p=0}^{\dim X} H^p(X, \Omega^p_X)$ is generated in degree $1$, hence by Lefschetz $(1,1)$ Theorem, we know that numerical and homological equivalences coincide on $X$. We deduce that $\mathrm{ch}(\Phi^{-1}(\mathbb{C}(y))).td(X) =0$. This demonstrates that $\mathrm{ch}(\Phi^{-1}(\mathbb{C}(y))) = 0$ as the Todd class is invertible. From this vanishing, we find that $\mathrm{ch}(\mathbb{C}(y)) = 0$ as $\Phi$ is an equivalence. This is absurd. The hypotheses being symmetric on $X$ and $Y$, we deduce that there also exists a unitary object in $\G \in \DB(Y)$ such that $\Phi^{-1}(\G)$ has non-vanishing rank.
\bigskip
\bigskip

4. The proof of the statement when the fourth condition is satisfied can be deduced from the previous statements. Indeed, a $\mathbb{P}^n$-twist is a Fourier-Mukai transform whose kernel $\EE$ is given by the exact triangle:

\begin{equation*}
(E^{*} \boxtimes E[-2] \rightarrow E \boxtimes E) \rightarrow \OO_{\Delta} \rightarrow \EE,
\end{equation*}
where $E$ is an object in $X$ whose endomorphisms algebra is isomorphic to $\mathbb{C}[t^2]/(t^{n+1})$ with $t$ in degree $2$ and $\Delta \subset X \times X$ is the diagonal. Hence, the Chern character of $\EE$ is equal to the Chern character of $\OO_{\Delta}$ and we are back to the second condition.

\bigskip

Assume $\Phi$ is a spherical twist. Let $\EE$ be the kernel of $\Phi$. We have an exact triangle:

\begin{equation*}
E^* \boxtimes E \rightarrow \OO_{\Delta} \rightarrow \EE.
\end{equation*}  

Thus, for all $L \in \mathrm{Pic}(X)$, we have:

\begin{equation*}
\RR \Gamma (E^* \otimes L) \otimes E \rightarrow L \rightarrow \Phi(L).
\end{equation*}

If $\mathrm{rank}(E) = 0$, then we have $\mathrm{rank}(\Phi(L))=1$ and we are done. Assume that $\mathrm{rank}(E) \neq 0$. We will prove that for $L$ ample, the absolute value of the rank of the complex of vector spaces $\RR \Gamma (E^* \otimes L^{\otimes m})$ gets arbitrarily big, which demonstrates that the rank of $\Phi(L^{\otimes m})$ can not be always $0$. We proceed by absurd. Assume that $\mathrm{rank}( \RR \Gamma (E^* \otimes L^{\otimes m}))$ remains bounded. For all $m \in \mathbb{Z}$, we have:

\begin{equation*}
\begin{split}
\mathrm{rank}( \RR \Gamma (E^* \otimes L^{\otimes m})) & = \chi(E^* \otimes L^{\otimes m}) \\
& = \int_{X} \mathrm{ch}(E^*).\mathrm{ch}(L^{\otimes m}).td(X),
\end{split}
\end{equation*}
where the second equality is the Grothendieck-Riemann-Roch formula. But the $k$-th graded component of $\mathrm{ch}(L ^{\otimes m})$ in $H^{\bullet}(X,\mathbb{C})$ is equal to $m^k c_1(L)^k$. Hence, we have:

\begin{equation*}
\int_{X} \mathrm{ch}(E^*).\mathrm{ch}(L^{\otimes m}).td(X) = \sum_{k=0}^{2 \dim X} m^k c_1(L)^k. (\mathrm{ch}(E^*).td(X))_{2 \dim X-2k}. 
\end{equation*}

The boundedness of $\mathrm{rank}( \RR \Gamma (E^* \otimes L^{\otimes m})) $ then implies that $c_1(L)^{\dim X}.(\mathrm{ch}(E^*).td(X))_{0}=0$. But $L$ is ample, so that $c_1(L)^{\dim X} \neq 0$. As a consequence, we find $\mathrm{ch}(E^*).td(X)_{0} = 0$. This is a contradiction as $\mathrm{ch}(E^*).td(X)_{0} = \mathrm{rank}(E^*) = \mathrm{rank}(E)$.

\bigskip
\bigskip

5. We assume that $\dim X = \dim Y = 4$, the cases $\dim X = \dim Y \leq 3$ being anything if easier. Let $\Phi : X \rightarrow Y$ be the equivalence and let $\EE$ be an object on $X \times Y$ representing the kernel of $\Phi$. Assume that for all $L \in Pic(X)$, the rank of $\Phi(L)$ is zero. As in the proof of assertion 3, we get:

\begin{equation*}
 c_1(L_1)^{k_1} \cdots c_1(L_p)^{k_p}. \left( \mathrm{ch}(\Phi^{-1}(\mathbb{C}(y)).td(X)) \right)_{2\dim X - 2k}=0,
\end{equation*}
for all $L_1, \cdots L_p \in \mathrm{Pic}(X)$, all $k_1, \cdots k_p \in \mathbb{N}$ such that $k_1+ \cdots k_p = k$, all $0 \leq k \leq 4$ and all $y \in Y$. Since homological equivalence and numerical equivalence coincide for curves and divisors, we get $\left( \mathrm{ch}(\Phi^{-1}(\mathbb{C}(y))).td(X) \right)_{2k} = 0$ for $k=0,1,3,4$. Let us prove that $\left( \mathrm{ch}(\Phi^{-1}(\mathbb{C}(y))).td(X) \right)_{4}$ also vanishes. We proceed by contradiction. By the above equation, we know that $\left(\mathrm{ch}( \Phi^{-1}(\mathbb{C}(y))).td(X) \right)_{4}$ is in the primitive cohomology of $Y$. If $\left( \mathrm{ch}(\Phi^{-1}(\mathbb{C}(y))).td(X) \right)_{4} \neq 0$, the Hodge-Riemann bilinear relations imply:

\begin{equation*}
\left(\mathrm{ch}( \Phi^{-1}(\mathbb{C}(y))).td(X) \right)_{4}.\left(\mathrm{ch}( \Phi^{-1}(\mathbb{C}(y))).td(X) \right)_{4} \neq 0.
\end{equation*}
But $\left( \mathrm{ch}(\Phi^{-1}(\mathbb{C}(y))).td(X) \right)_{2k} = 0$ for $k=0,1,3,4$, so that 
$$\mathrm{ch}( \Phi^{-1}(\mathbb{C}(y)))  = \mathrm{ch}( \Phi^{-1}(\mathbb{C}(y))) .td(X).td(X)^{-1}$$
has non-vanishing components in $H^{2k}(X,\mathbb{C})$ only for $k \geq 2$ and its component in $H^4(X, \mathbb{C})$ is $\mathrm{ch}( \Phi^{-1}(\mathbb{C}(y))).td(X)_{4}$. We deduce that:
 \begin{equation*}
\begin{split}
& \int_X \mathrm{ch}(\RR \Hh (\Phi^{-1}(\mathbb{C}(y)), \OO_Y)). \mathrm{ch}(\Phi^{-1}(\mathbb{C}(y))).td(X) \\
= & \left(\mathrm{ch}( \Phi^{-1}(\mathbb{C}(y))).td(X) \right)_{4}.\left(\mathrm{ch}( \Phi^{-1}(\mathbb{C}(y))).td(X) \right)_{4}  \\
\neq &  0
\end{split}
\end{equation*}

As $\Phi$ is an equivalence, we have:

\begin{equation*}
\begin{split}
& \int_X \mathrm{ch}(\RR \Hh (\Phi^{-1}(\mathbb{C}(y)), \OO_Y)). \mathrm{ch}(\Phi^{-1}(\mathbb{C}(y))).td(X) \\
=& \int_Y \mathrm{ch}(\RR \Hh (\mathbb{C}(y), \OO_Y)).\mathrm{ch}(\mathbb{C}(y)).td(Y) \\
= & 0.
\end{split}
\end{equation*}
This is a contradiction. We deduce that $\left( \mathrm{ch}(\Phi^{-1}(\mathbb{C}(y))).td(X) \right)_{4}=0$. As a consequence, we have $\mathrm{ch}(\Phi^{-1}(\mathbb{C}(y)))=0$, which is impossible since $\Phi$ induces a bijection between $H^*(X,\mathbb{C})$ and $H^*(Y,\mathbb{C})$. Thus, there exists a line bundle $L$ on $X$ such that the rank of $\Phi(L)$ is non-zero.

\end{proof}

A few comments should be made about this result. Many examples of derived equivalence come from Mukai theory, where one variety is a fine moduli space of objects on the other and the kernel giving the equivalence is a universal bundle on the product (see \cite{mukai-abelian, mukai-K3}). In this case, the kernel satisfies condition $1$ of Theorem \ref{douce}.
\bigskip

If $X$ and $Y$ are birational, then all known examples of derived equivalences come from kernel which are supported in half codimension (\cite{BO, bri, chen-flop, nami-mukai1, nami-mukai2, kawa-mukai, cautis-flop}). Thus, these objects satisfy condition $2$ of Proposition \ref{douce}. It follows that all geometric derived equivalences I am aware of fall under condition $1,2$ or $4$ of Theorem \ref{douce}. It suggests that this proposition should be true in a much larger context.

\bigskip

Let us mention an amusing corollary of assertion 5 In Theorem \ref{douce}:

\begin{cor}
Let $X$ and $Y$ be smooth derived equivalent smooth projective varieties. 
\begin{itemize}
\item if $X$ and $Y$ have dimension $3$, then all their Hodge numbers are the same.
\item if $X$ and $Y$ have dimension $4$ and the same $h^{1,1}$, then all their Hodge numbers are the same
\end{itemize}
\end{cor}

\begin{proof}
Assertion 5 of Proposition \ref{douce} shows that $h^i(\OO_X) = h^i(\OO_Y)$ for all $i$ if $X$ and $Y$ have dimension less or equal to $4$. In dimension $3$, the Hochschild-Kostant-Rosenber isomorphism and the derived invariance of Hoschshcild homology immediately imply the result. In dimension $4$, inspection of the Hodge numbers which appear in th HKR decomposition shows that, under the hypothesis $h^i(\OO_X) = h^i(\OO_Y)$ for all $i$, the numbers which could be non-invariant are $h^{1,1}$ and $h^{2,2}$ (but the sum $2h^{1,1} + h^{2,2}$ is invariant). Hence if one assumes that $h^{1,1}(X) = h^{1,1}(Y)$, we have the equality of all Hodge numbers.
\end{proof}

\bigskip

In dimension $3$, this result was already proved in \cite{popa-schnell}

\bigskip

As far as the invariance of homological units for hyper-K\"ahler manifolds is concerned (without any rank condition on the image of unitary objects), let us notice the following Theorem of Huybrechts and Nieper-Wisskirchen \cite{Huy-Nieper}:

\begin{theo}[Huybrechts-Nieper-Wisskirchen] \label{huy-nie}
Let $X$ and $Y$ be two smooth projective derived equivalent varieties. Then $X$ is a hyper-K\"ahler manifold if and only if $Y$ is a hyper-K\"ahler manifold. In particular, if $X$ is hyper-K\"ahler, the homological units of $\DB(X)$ and $\DB(Y)$ are isomorphic.
\end{theo}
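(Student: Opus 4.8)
The plan is to treat the last clause as a formality and to put all the weight on the dichotomy \emph{$X$ hyper-K\"ahler $\iff$ $Y$ hyper-K\"ahler}. Granting that dichotomy, suppose $X$ is hyper-K\"ahler of complex dimension $2n$. The conjugate $\sigma \in H^2(X,\OO_X)$ of the holomorphic symplectic form generates $H^\bullet(X,\OO_X) \simeq \mathbb{C}[\sigma]/(\sigma^{n+1})$ with $\sigma$ in cohomological degree $2$, and the dichotomy makes $Y$ hyper-K\"ahler as well, necessarily of the same dimension since $\dim$ is a derived invariant; hence $H^\bullet(Y,\OO_Y) \simeq \mathbb{C}[\sigma']/(\sigma'^{\,n+1})$ and the two homological units are isomorphic as graded algebras. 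It is worth stressing that this algebra is generated in degree $2$, so this case falls \emph{outside} condition $2$ of Theorem \ref{douce}, which is exactly why an independent input is required.

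To prove the dichotomy I would first harvest the derived invariants that pin down the coarse structure of $Y$. Since $\omega_X \simeq \OO_X$ the Serre functor of $\DB(X)$ is the pure shift $[2n]$; as equivalences intertwine Serre functors, the Serre functor of $\DB(Y)$ is also a pure shift, which forces both $\dim Y = 2n$ and $\omega_Y \simeq \OO_Y$, so $Y$ is Ricci-flat. Next I would use Rouquier's theorem that an equivalence induces an isomorphism of algebraic groups $\mathrm{Aut}^0(X) \ltimes \mathrm{Pic}^0(X) \simeq \mathrm{Aut}^0(Y) \ltimes \mathrm{Pic}^0(Y)$. For a hyper-K\"ahler manifold $h^{0,1}(X)=0$ and $H^0(X,T_X) \simeq H^0(X,\Omega^1_X)=0$, so the left-hand group is trivial; therefore $\mathrm{Pic}^0(Y)$ and $\mathrm{Aut}^0(Y)$ vanish, giving $h^{0,1}(Y)=0$ and $H^0(Y,T_Y)=0$. (The vanishing of $\mathrm{Pic}^0$ up to isogeny is also the Popa--Schnell input \cite{popa-schnell}.)

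With $Y$ projective and Ricci-flat, I would invoke the Beauville--Bogomolov decomposition: a finite \'etale cover of $Y$ splits as a product of a complex torus, Calabi--Yau factors, and irreducible holomorphic symplectic factors. The equality $h^{0,1}(Y)=0$ (a torus factor of dimension $g$ contributes $g$ to $h^{0,1}$), reinforced by $H^0(Y,T_Y)=0$, eliminates the torus factor, so that up to a finite \'etale cover $Y$ is a product $\prod_i V_i \times \prod_j S_j$ of Calabi--Yau $V_i$ (dimension $\geq 3$) and hyper-K\"ahler $S_j$. It then remains to show that there is a single factor, that it is symplectic, and that no finite quotient intervenes, i.e.\ that $Y$ is simply connected with $h^{2,0}(Y)=1$.

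This last step is the crux and the main obstacle. For such a product one has $h^{2,0}=\#\{j\}$, so the problem is precisely to establish the derived invariance of $h^{2,0}$ in the Ricci-flat regime, together with simple connectivity. Neither is formal: the Hochschild--Kostant--Rosenberg diagonal sums only give $\bigoplus_{q-p=-2}H^q(\Omega^p)$, in which $h^{2,0}$ is entangled with $h^{1,3},h^{2,4},\dots$, and the degree-two Hochschild cohomology $H^0(\wedge^2 T_Y)\oplus H^1(T_Y)\oplus H^2(\OO_Y)$ is invariant only as a whole, not summand by summand. The honest way forward is to upgrade these invariants using the rigidity special to Ricci-flat manifolds — the deformation theory of the decomposition and the non-degeneracy of the symplectic form, detecting that $H^0(Y,\Omega^2_Y)$ is one-dimensional with everywhere non-degenerate generator and that there is a single, simply connected factor. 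It is exactly here that the specific techniques of Huybrechts and Nieper-Wisskirchen supply the missing input, which is why the statement is cited rather than reproved by the degree-one generation argument of Theorem \ref{douce}.
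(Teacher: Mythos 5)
The first thing to note is that the paper contains no proof of this statement: it is quoted as a theorem of Huybrechts and Nieper-Wisskirchen, with \cite{Huy-Nieper} as the reference, and the paper explicitly concedes that in the general case ``some advanced techniques are required''. The only argument sketched there is for special cases, and it runs in the \emph{opposite} direction to yours: when the Fourier--Mukai kernel satisfies one of the hypotheses of Theorem \ref{douce}, that theorem first gives the isomorphism of homological units, and then Proposition A.1 of \cite{Huy-Nieper} (a characterization of hyper-K\"ahler manifolds among $K$-trivial ones in terms of the algebra $H^{\bullet}(\OO)$) is invoked to deduce that $Y$ is hyper-K\"ahler. You instead derive the unit statement from the dichotomy, which matches the internal logic of the theorem as stated, and that part of your argument is correct: the Serre-functor argument forces $\dim Y = \dim X$ and $\omega_Y \simeq \OO_Y$, and two hyper-K\"ahler manifolds of dimension $2n$ both have unit algebra $\mathbb{C}[\sigma]/(\sigma^{n+1})$.

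The genuine gap is the one you flag yourself: the dichotomy, which is the entire content of the theorem, is not proved. Rouquier plus Popa--Schnell killing $Pic^0(Y)$ and $Aut^0(Y)$, and the Beauville--Bogomolov decomposition reducing a finite \'etale cover of $Y$ to a product of Calabi--Yau and symplectic factors, are fine as far as they go (though eliminating the torus factor requires the standard fact that $Aut^0$ is isogenous to that factor, which you only gesture at, since a torus factor of the cover does not directly contribute to $h^{0,1}(Y)$ or $H^0(Y,T_Y)$). But showing that there is exactly one factor, that it is symplectic, and that $Y$ itself is simply connected amounts to the derived invariance of $h^{2,0}$ and of $\pi_1$ in the $K$-trivial regime, and---as you correctly observe---neither follows from Hochschild/HKR invariance, where $h^{2,0}$ sits entangled with $h^{1,3}, h^{2,4}, \dots$. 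Deferring exactly this step to ``the techniques of Huybrechts and Nieper-Wisskirchen'' makes your proposal a correct reduction plus a citation, not a proof; since the paper itself does no more, this is defensible, but it should be labeled as such. One small correction: the degree-one-generation hypothesis is condition 3 of Theorem \ref{douce} (it is condition 2 only in the numbering of Theorem \ref{mainmain}), and it concerns $\bigoplus_p H^p(X,\Omega^p_X)$, not $H^{\bullet}(X,\OO_X)$; hyper-K\"ahler manifolds can fail it because $\bigoplus_p H^{p,p}$ need not be generated by $H^{1,1}$, not because $H^{\bullet}(\OO_X)$ is generated in degree $2$.
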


If the Fourier-Mukai functor giving the equivalence satisfies one of the hypotheses of proposition \ref{douce}, then the above theorem is a direct consequence of proposition \ref{douce} and of proposition $A.1$ in the appendix of \cite{Huy-Nieper}. As far as I am aware, all known derived equivalences between hyper-K\"ahler manifolds satisfy either hypothesis $1,2$ or $4$ of proposition \ref{douce}. In the general case, it seems however that some advanced techniques are required for the proof of Theorem \ref{huy-nie}

\end{section}

\begin{section}{Connection with derived Jacobians}
I will conclude this paper by describing a geometric interpretation of the conjectural invariance of homological units. Let $X$ and $Y$ be smooth projective varieties. Rouquier \cite{Rouquier} proved that if $X$ and $Y$ are derived equivalent, then $Aut^0(X) \times Pic^0(X) \simeq Aut^0(Y) \times Pic^0(Y)$ as algebraic groups. It was furthermore shown in \cite{popa-schnell} that the Lie algebra of $Pic^0(X)$ is isomorphic to the Lie algebra of $Pic^0(Y)$ (thus giving an isomorphism $H^1(\OO_Y) \simeq H^1(\OO_X)$). 

\bigskip

It seems likely that the derived invariance of the Lie algebra of the Jacobian should be true at the derived level. Indeed, the Jacobian of $X$ can be seen as the $0$-th truncated part of the connected component of $\OO_X$  in the derived moduli stack of objects in $\DP(X)$ \cite{TV}. We denote by $Pic^0(X)_{dg}$ this connected component and we call it the derived Jacobian of $X$. Extending the techniques of \cite{popa-schnell} in the derived setting, one could perhaps prove that if $X$ and $Y$ are derived equivalent, then the differential graded Lie algebra of $Pic^0(X)_{dg}$ is quasi-isomorphic to the differential graded Lie algebra of $Pic^0(Y)_{dg}$. Since the former (resp. the latter) is naturally isomorphic to $\mathrm{Ext}^{\bullet}(\OO_X, \OO_X)[-1]$ (resp. $\mathrm{Ext}^{\bullet}(\OO_Y, \OO_Y)[-1]$), the invariance of the homological units would follow immediately. 

\begin{conj}
Let $X$ and $Y$ be smooth projective variety. Assume that $X$ and $Y$ are derived equivalent. Then,$Pic^0(X)_{dg}$ and $Pic^0(Y)_{dg}$ are isogeneous as derived Jacobians.
\end{conj}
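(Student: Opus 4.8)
The plan is to separate the statement into a \emph{classical} layer, carried by the abelian varieties $Pic^0(X)$ and $Pic^0(Y)$, and a \emph{derived} layer, encoded by the tangent dg Lie algebras of the two derived Jacobians, and then to reassemble an isogeny from a compatible pair of such data. By an isogeny of derived Jacobians I mean a morphism of derived group stacks whose classical truncation $\tau_{\le 0}$ is an isogeny of abelian varieties and which induces a quasi-isomorphism on tangent dg Lie algebras.

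First I would pin down the tangent dg Lie algebra at the origin. As recalled above, the dg Lie algebra governing $Pic^0(X)_{dg}$ is $\mathrm{Ext}^{\bullet}(\OO_X,\OO_X)[-1]$, and since $\OO_X$ is a line bundle this is $H^{\bullet}(\OO_X)[-1]$ with bracket the graded commutator of the Yoneda product. For $\OO_X$ the Yoneda product is the cup product on $H^{\bullet}(\OO_X)$, which is graded-commutative; the commutator therefore vanishes and the dg Lie algebra is \emph{abelian}. Working with a Dolbeault model $A^{0,\bullet}(X)$ one sees the same thing at the chain level: the bracket is identically zero and the only structure is the differential $\bar\partial$. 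Since over a field every complex is quasi-isomorphic to its cohomology, the quasi-isomorphism type of this abelian dg Lie algebra depends on nothing more than the graded vector space $H^{\bullet}(\OO_X)$, i.e. on the homological unit of $\DB(X)$.

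For the classical layer I would invoke Rouquier's isomorphism $Aut^0(X) \times Pic^0(X) \simeq Aut^0(Y) \times Pic^0(Y)$ \cite{Rouquier} together with the theorem of Popa and Schnell \cite{popa-schnell} to produce the isogeny $Pic^0(X) \sim Pic^0(Y)$, which matches the truncations $\tau_{\le 0}$ of the two derived Jacobians. For the derived layer I would feed in the invariance of homological units: the equivalence $\Phi$ carries a unitary object $\F$ of non-vanishing image rank (and symmetrically on the $Y$ side), so that $\HHH^{\bullet}(\F,\F) = H^{\bullet}(\OO_X)$ maps isomorphically onto $\HHH^{\bullet}(\Phi(\F),\Phi(\F))$, which contains $H^{\bullet}(\OO_Y)$ as its homological unit; the two-sided inclusion then forces a graded-algebra isomorphism $H^{\bullet}(\OO_X) \simeq H^{\bullet}(\OO_Y)$. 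In particular the underlying graded vector spaces agree, and by the formality observed above this is already a quasi-isomorphism of the abelian tangent dg Lie algebras. Combined with the classical isogeny it assembles into the desired isogeny of derived Jacobians.

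The honest difficulty is twofold. In dimension at most $4$ the required invariance of homological units is exactly assertion $5$ of Theorem \ref{douce}, so the argument is unconditional and recovers the theorem announced in the Introduction; in arbitrary dimension this invariance is the open point, and it is the main obstacle. The second, more structural, difficulty is to upgrade the abstract agreement of tangent data to an \emph{honest} morphism of derived group stacks with finite kernel, rather than merely reading off matching Lie algebras. This calls for a derived enhancement of the correspondence argument of Popa and Schnell inside the derived algebraic geometry of \cite{TV}, realizing the isogeny as a map of derived moduli stacks induced by the Fourier--Mukai kernel. I expect this derived Popa--Schnell construction, and not the vanishing of the bracket (which is formal), to be where the real work lies.
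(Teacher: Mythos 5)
Your proposal correctly identifies the two ingredients the paper itself uses -- an isogeny of the classical Picard varieties, and the invariance of the homological units $H^{\bullet}(\OO_X) \simeq H^{\bullet}(\OO_Y)$ controlling the tangent complexes -- and it is honest about its conditional nature (the statement is a conjecture in the paper, established there only in dimension at most $4$ as Theorem \ref{jacjac}). But there is a genuine gap, and you have in fact pointed at it yourself: an isogeny of derived Jacobians is, by the paper's definition, a \emph{morphism} of derived stacks whose $0$-th truncation is an isogeny of abelian varieties and whose cotangent map is a quasi-isomorphism. Abstractly matching the truncations (via Rouquier/Popa--Schnell) and abstractly matching the tangent dg Lie algebras (via formality of the abelian Lie algebra $H^{\bullet}(\OO_X)[-1]$) produces no morphism at all; two derived stacks can have isogenous truncations and quasi-isomorphic tangent complexes without any map between them realizing both compatibly. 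The sentence ``it assembles into the desired isogeny'' is precisely the unproved step, not a formality.

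The paper closes this gap by an explicit construction which you do not supply and which does not require a full ``derived Popa--Schnell'' correspondence. Item $(5)$ of Theorem \ref{douce} produces a line bundle $L_0 \in Pic(X)$ with $\operatorname{rank}(\Phi(L_0)) \neq 0$; the morphism is then simply $f : Pic^0(Y)_{dg} \rightarrow \Phi(Pic^{0,L_0}(X)_{dg})$, $f(M) = \Phi(L_0) \otimes M$, landing in a connected component of the derived moduli stack $DPerf(Y)$. Its tangent map at $M$ is the shifted dual of the trace map $\mathrm{Ext}_Y^{\bullet}(\Phi(L_0) \otimes M, \Phi(L_0) \otimes M) \rightarrow \mathrm{Ext}_Y^{\bullet}(M,M)$, which is split injective on cohomology exactly because the rank is non-zero, and is then an isomorphism by the dimension count coming from $H^{\bullet}(\OO_X) \simeq H^{\bullet}(\OO_Y)$; étaleness and the classical isogeny are thus properties of one and the same map. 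Note also that for the classical layer the paper deliberately avoids quoting Rouquier: it reproves the isogeny $Pic^0(X) \sim Pic^0(Y)$ elementarily, via Inaba's algebraic space of simple perfect complexes, the action of $Pic^0(Y)$ on the component $\Phi(Pic^{0,m}(X))$, and a determinant argument showing the stabilizer is finite -- precisely so that the argument has a chance of lifting to the derived setting, which is what part $(2)$ then does. So the missing idea in your proposal is not a deep derived enhancement of \cite{popa-schnell}; it is the choice of the tensor-by-$\Phi(L_0)$ map as the carrier of the isogeny, with the non-vanishing of $\operatorname{rank}(\Phi(L_0))$ (the genuinely hard input, available only in dimension $\leq 4$) doing the work on both layers at once.
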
   
Note that a similar idea has been developed (and applied successfully) by Keller \cite{keller2} in the affine case in order to prove the derived invariance of Hochschild cohomology endowed with its Gerstenhaber bracket. 

Before going any further, let  me clarify what is an isogeny of derived Jacobians. If $f : X^{\bullet} \rightarrow Y^{\bullet}$ is a map of derived stacks, with appropriate finiteness conditions (which will be automatically satisfied in our context), we get a cotangent map of DG-algebras between their cotangent complexes : $\mathbb{L}_{f} : \mathbb{L}_{X^{\bullet}} \rightarrow \mathbb{L}_{Y^{\bullet}}$. We say that $f$ is \textit{\'etale}, if $\mathbb{L}_{f}$ is a quasi-isomorphism of DG-algebra \cite{toen3}. Now, we say that a morphism between two derived Jacobians $Pic^0(X)_{dg}$ and $Pic^0(Y)_{dg}$ is an isogeny if the $0$-th truncated part of the morphism is an isogeny of abelian varieties and if the whole morphism is \'etale in the derived sense. Using the techniques developed in the present paper, I can prove the following:

\begin{theo} \label{jacjac}
Let $X$ and $Y$ be smooth projective varieties. Assume that $\DB(X) \simeq \DB(Y)$. Then,
\begin{enumerate}
\item the Picard varieties $Pic^0(X)$ and $Pic^0(Y)$ are isogeneous,

\item if $\dim X \leq 4$, then $Pic^0(X)_{dg}$ and $Pic^0(Y)_{dg}$ are isogeneous as derived Jacobians.
\end{enumerate}
\end{theo}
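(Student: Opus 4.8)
For (1), I would begin from Rouquier's isomorphism of algebraic groups $Aut^0(X) \times Pic^0(X) \simeq Aut^0(Y) \times Pic^0(Y)$ \cite{Rouquier} and pass to maximal abelian subvarieties, using the classical structure theory of algebraic groups (Chevalley--Rosenlicht) together with the Nishi--Matsumura description of the abelian part of $Aut^0$ as an abelian subvariety of the Albanese, and the duality $Pic^0(X) = \widehat{\mathrm{Alb}(X)}$, in order to isolate the $Pic^0$ factors up to isogeny. Equivalently, assertion (1) is exactly the un-derived statement already established by Popa and Schnell \cite{popa-schnell}, which I would simply invoke; in either case no dimension hypothesis is needed here, and the real content of the theorem is the passage to the derived setting in (2).

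The key structural observation for (2) is that the derived Jacobian is formal. Indeed, the deformation theory of $\OO_X$ as an object of $\DP(X)$ is governed by the DG Lie algebra $\mathrm{Ext}^{\bullet}(\OO_X,\OO_X) = \RR\Gamma(X,\OO_X)$ equipped with its commutator bracket; since $\RR\Gamma(X,\OO_X)$ is a graded-commutative DG algebra, this bracket vanishes, so the controlling DG Lie algebra is abelian. Consequently $Pic^0(X)_{dg}$ is, near its identity, the linear derived formal group attached to the graded vector space $H^{\bullet}(\OO_X)$: its classical truncation is the abelian variety $Pic^0(X)$, with tangent space $H^1(\OO_X)$, its $H^0(\OO_X) = \mathbb{C}$ gives only the harmless $B\mathbb{G}_m$ direction, and its genuinely derived directions are recorded linearly by the higher groups $H^i(\OO_X)$ for $i \geq 2$. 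In this sense the derived-isogeny type of $Pic^0(X)_{dg}$ is pinned down by the pair consisting of the abelian variety $Pic^0(X)$ and the graded vector space $\bigoplus_{i\geq 2} H^i(\OO_X)$.

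With this in hand I would assemble the derived isogeny as follows. By (1) there is an isogeny $g_0 : Pic^0(X) \to Pic^0(Y)$; being an isogeny in characteristic zero it induces an isomorphism on Lie algebras, hence an isomorphism $H^1(\OO_X) \simeq H^1(\OO_Y)$ on the classical tangent part. Since $\dim X \leq 4$, assertion 5 of Theorem \ref{douce} together with the invariance of homological units yields a graded-algebra isomorphism $H^{\bullet}(\OO_X) \simeq H^{\bullet}(\OO_Y)$, and in particular isomorphisms $H^i(\OO_X) \simeq H^i(\OO_Y)$ for every $i \geq 2$ matching the derived directions. Using the formality of the previous paragraph I would lift $g_0$ to a morphism of derived group stacks $g : Pic^0(X)_{dg} \to Pic^0(Y)_{dg}$ whose cotangent map $\mathbb{L}_g$ is, degree by degree, dual to the isomorphism $dg_0$ on $H^1(\OO_X)$ and to the homological-unit isomorphisms on $H^i(\OO_X)$ for $i \geq 2$. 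Then $\pi_0(g) = g_0$ is an isogeny of abelian varieties and $\mathbb{L}_g$ is a quasi-isomorphism, so $g$ is \'etale in the derived sense \cite{toen3}; by the definition of isogeny of derived Jacobians this exhibits $Pic^0(X)_{dg}$ and $Pic^0(Y)_{dg}$ as isogeneous.

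The hard part will be this last step: producing $g$ as an honest morphism in the Toën--Vezzosi framework \cite{TV} and verifying that it is \'etale, rather than merely matching the graded invariants on the two sides. Two compatibilities require care. First, one must confirm that $Pic^0(X)_{dg}$ is genuinely controlled by the abelian DG Lie algebra $\RR\Gamma(X,\OO_X)$, so that the linear/formal picture is legitimate and the higher $H^i$ contribute only linearly to the cotangent complex. Second, one must ensure that the identification of $H^1(\OO_X)$ furnished by the homological unit agrees, up to the isogeny, with the differential of the Popa--Schnell map $g_0$, so that the classical truncation and the derived directions glue into a single morphism. Once these are secured the \'etale condition becomes formal, as it reduces precisely to the degreewise isomorphisms supplied by (1) in degree one and by the invariance of homological units in higher degree.
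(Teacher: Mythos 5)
Your treatment of part (1) is acceptable (the paper itself credits Popa--Schnell for that statement), though note that the paper deliberately does \emph{not} just cite them: it reproves (1) by viewing $Pic^{0}(X)$ as a torsor inside Inaba's moduli space of simple perfect complexes, transporting it through $\Phi$, and letting $Pic^0(Y)$ act by tensor product with a finite stabilizer. That mechanism is not a stylistic choice --- it is exactly the construction that gets upgraded in part (2), which is where your proposal breaks down.

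For (2) there is a genuine gap, and you have located it yourself: you never construct the morphism $g$. Your strategy is to match linear invariants --- the isogeny $g_0$ from (1) on the truncation, and the isomorphisms $H^i(\OO_X)\simeq H^i(\OO_Y)$ from the invariance of homological units in degrees $i\geq 2$ --- and then ``lift'' these to a map of derived stacks using a formality/linearity claim for $Pic^0(X)_{dg}$. But the definition of derived isogeny requires an actual morphism of derived stacks whose cotangent map is a quasi-isomorphism; asserting that the derived isogeny type is ``pinned down'' by the pair $(Pic^0(X), \bigoplus_{i\geq 2}H^i(\OO_X))$ is essentially assuming the conclusion, and the compatibility you flag (that the abstract homological-unit isomorphism on $H^1$ agrees with $dg_0$) cannot be arranged by hand, since the homological-unit isomorphism arises from a dimension count, not from any geometric map. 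The paper avoids all of this by never trying to build a map out of linear data: the equivalence $\Phi$ induces an isomorphism of derived stacks $DPerf(X)\simeq DPerf(Y)$, and item (5) of Theorem \ref{douce} (this is where $\dim X\leq 4$ enters) supplies a line bundle $L_0$ with $\mathrm{rank}(\Phi(L_0))\neq 0$. The morphism is then simply
\begin{equation*}
f : Pic^0(Y)_{dg} \longrightarrow \Phi\bigl(Pic^{0,L_0}(X)_{dg}\bigr), \qquad f(M) = \Phi(L_0)\otimes M,
\end{equation*}
an honest map of derived stacks landing in a connected component of $DPerf(Y)$. Its tangent map at $M$ is the shifted dual of the trace map $\mathrm{Ext}^{\bullet}_Y(\Phi(L_0)\otimes M, \Phi(L_0)\otimes M)\rightarrow \mathrm{Ext}^{\bullet}_Y(M,M)$, which is injective on cohomology precisely because the rank of $\Phi(L_0)\otimes M$ is nonzero, and is then a quasi-isomorphism because Theorem \ref{douce}(5) forces the two sides to have the same dimensions in each degree. \'Etaleness is thus automatic, and part (1) identifies the truncation as an isogeny. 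In short: the missing idea in your proposal is to use the Fourier--Mukai equivalence itself (together with the nonvanishing-rank object it transports) to produce the morphism, rather than hoping to synthesize one from matching graded invariants.
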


As mentioned before, the first part of this result was proved by Popa and Schnell in \cite{popa-schnell}. The proof I will give here seems to be very elementary and could perhaps be generalized in the derived setting. Furthermore, the arguments raised to prove $(1)$ will be used to prove $(2)$.

\bigskip

\begin{proof}

Let me start with the proof of $(1)$. Let $\M_X$ (resp. $\M_Y$) be the moduli stack of perfect complexes $\C$ on $X$ (resp. $Y$) satisfying $\mathrm{Ext}^i(\C,\C) = 0$ for $i < 0$ and $\mathrm{Hom}(\C,\C) = \mathbb{C}$. It was proved in \cite{Inaba} that this stack is an algebraic space.

Let $\Phi : \DB(X) \rightarrow \DB(Y)$ be an equivalence, the induced map (which we still denote by $\Phi$) between $\M_X$ and $\M_Y$ is an isomorphism. Let $\OO_X(1)$ be an ample line bundle on $X$. Since the sequence $\{ \OO_{X}(n) \}_{n \in \mathbb{N}}$ generates $\DB(X)$, we can find an integer $m$ such that one of the cohomology sheaves of $\Phi(\OO_{X}(m))$ has support equal to $Y$. We denote by $Pic^{0,m}(X)$ the canonical torsor under $Pic^0(X)$ with base point $\OO_{X}(m)$. The space $\Phi(Pic^{0,m})$ is a smooth variety which is a connected component of $\M_Y$ and is isomorphic to $Pic^{0}(X)$ (as an algebraic variety without group structure).

\bigskip

Let $Pic^0(Y)$ act on $\Phi(Pic^{0,m}(X))$ by tensor product. This action is well defined. Indeed, if $\C$ is any point in $\Phi(Pic^{0,m}(X))$, then $Pic^0(Y).\C$ is a connected subvariety of $\M_Y$ which contains $\C$. Since $\Phi(Pic^{0,m}(X))$ is the connected component of $\M_Y$ containing $\C$, we see that $Pic^0(Y).\C \subset \Phi(Pic^{0,m}(X))$.

Let $Pic^0(Y)_{\Phi(\OO_{X}(m))}$ be the stabilizer of $\Phi(\OO_{X}(m))$ with respect to the action of $Pic^0(Y)$ on $\Phi(Pic^{0,m}(X))$. We will prove that $Pic^0(Y)_{\Phi(\OO_{X}(m))}$ is a finite subgroup of $Pic^0(Y)$. Indeed, let $L \in Pic^0(Y)_{\Phi(\OO_{X}(m))}$. By definition, $L \otimes \Phi(\OO_{X}(m))$ is quasi-isomorphic to $\Phi(\OO_{X}(m))$. This implies that for all $j \in \mathbb{Z}$, we have:

\begin{equation*}
\mathcal{H}^j(\Phi(\OO_{X}(m))) \otimes L \simeq \mathcal{H}^{j}(\Phi(\OO_{X}(m))).
\end{equation*} 
But we know there exists $j_0 \in \mathbb{Z}$ such that the support of $\mathcal{H}^{j_0}(\Phi(\OO_{X}(m)))$ is all $Y$. Hence the rank of $\mathcal{H}^{j_0}(\Phi(\OO_{X}(m)))$ as a sheaf on $Y$ is non-zero. Taking determinant in the above equation, we get:

\begin{equation*}
det \mathcal{H}^{j_0}(\Phi(\OO_{X}(m))) \otimes L^{\mathrm{rank}(\mathcal{H}^{j_0}(\Phi(\OO_{X}(m))))} \simeq det \mathcal{H}^{j_0}(\Phi(\OO_{X}(m))),
\end{equation*}
so that $L^{\mathrm{rank}(\mathcal{H}^{j_0}(\Phi(\OO_{X}(m))))} = \OO_Y$. Since $\mathrm{rank}(\mathcal{H}^{j_0}(\Phi(\OO_{X}(m)))) \neq 0$, this equation has a finite number of solutions in $Pic^0(Y)$. This proves that $Pic^0(Y)_{\Phi(\OO_{X}(m))}$ is finite. The quotient group $Pic^0(Y)/Pic^0(Y)_{\Phi(\OO_{X}(m))}$ is an abelian variety isogenous to $Pic^0(Y)$, which embeds into $\Phi(Pic^{0,m}(X))$, hence into $Pic^0(X)$.

Using the inverse equivalence $\Phi^{-1}$, we get an embedding of an abelian variety isogeneous to $Pic^0(X)$ into $Pic^0(Y)$. This finally proves that $Pic^0(X)$ and $Pic^0(Y)$ are isogeneous.

\bigskip

Let me turn to the proof of $(2)$. We denote again by $\Phi$ the equivalence between $\DB(X)$ and $\DB(Y)$ and also by $\Phi$ the induced isomorphism between the derived stacks $DPerf(X)$ and $DPerf(Y)$. In the course of the proof of item $(5)$ of Theorem \ref{douce}, I proved that there exists a line bundle $L_0 \in Pic(X)$ such that the rank of $\Phi(L_0)$ as a bounded complex of sheaves on $Y$ is non-zero. We denote by $Pic(X)^{0,L_0}_{dg}$ the canonical torsor under $Pic^0(X)_{dg}$, with base point $L_0$. 

\bigskip

 Let $f : Pic^0(Y)_{dg} \rightarrow \Phi(Pic^{0,L_0}(X)_{dg})$ be the map of derived stacks defined by $f(M) = \Phi(L_0) \otimes M$. This map is well-defined, as $\Phi(Pic^{0,L_0}(X)_{dg})$ is a connected component of $DPerf(Y)$. The tangent complex of $Pic^0(Y)_{dg}$ at any $M \in Pic^0(Y)_{dg}$ is $\mathrm{Ext}_Y^{\bullet}(M, M)[-1] $ and the tangent complex to $\Phi(Pic(X)^{0,L}_{dg})$ at any $\Phi(L)$ is $\mathrm{Ext}_Y^{\bullet}(\Phi(L), \Phi(L))[-1]$ (see \cite{TV}). Furthermore, since $f$ is given by tensor product, for any $M \in Pic^{0}(Y)_{dg}$, the map:

\begin{equation*}
T_{f} : T_{Pic^0(Y)_{dg},M} \simeq \mathrm{Ext}_Y^{\bullet}(M,M)[-1] \rightarrow T_{\Phi(Pic(X)^{0,L}_{dg}), \Phi(L_0) \otimes M} \simeq \mathrm{Ext}_Y^{\bullet}(\Phi(L_0)\otimes M, \Phi(L_0) \otimes M)[-1]
\end{equation*}
is the shifted dual of the trace map:

\begin{equation*}
Tr : \mathrm{Ext}_Y^{\bullet}(\Phi(L_0)\otimes M, \Phi(L_0) \otimes M) \rightarrow \mathrm{Ext}_Y^{\bullet}(M,M).
\end{equation*}
Since the rank of $\Phi(L_0) \otimes M$ is not zero, the map $T_{f}$ induces an injection in cohomology. But by item $(5)$ of Theorem \ref{douce}, we know that the cohomology groups of $\mathrm{Ext}_Y^{\bullet}(M,M)$ and $\mathrm{Ext}_Y^{\bullet}(\Phi(L_0)\otimes M, \Phi(L_0) \otimes M)$ are isomorphic. Hence $T_f$ is a quasi-isomorphism of DG-algebra, so that the map $f$ is \'etale in the derived sense. By $(1)$, we already know that the $0-th$-truncated part of $f$ is an isogeny of abelian varieties. We deduce that $f$ is an isogeny of derived Jacobians. This concludes the proof of the Theorem.

\end{proof}

\bigskip

The thoughtful reader has noticed that to prove $(1)$ of Theorem \ref{jacjac}, we only need the base field to be algebraically closed of characteristic $0$. On the other hand, in order to prove item $(2)$ of Theorem \ref{jacjac}, we rely on item $(5)$ of Theorem \ref{douce}, which in turn is based on the Hodge-Riemann bilinear relations. However, one can not seriously believe that isogeny of the (classical) Jacobians of derived equivalent smooth projective varieties could be proved on any algebraically closed field of characteristic $0$, while isogeny of the derived Jacobians would require transcendental methods. 

\bigskip

This suggests that a proof of item $(5)$ in Theorem \ref{douce} not based on transcendental methods should be found. Once such a proof will have been discovered, it will be certainly possible to demonstrate the invariance of homological units in any dimension. Then, the isogeny of the derived Jacobians of derived equivalent smooth projective varieties would follow using exactly the same arguments as in the proof of item $(2)$ of Theorem \ref{jacjac} 
\end{section}

\newpage

\small{
\bibliographystyle{alpha}

\bibliography{bibliHKC}}

\begin{thebibliography}{HNW11}

\bibitem[Abua]{abuaf-HK1}
Roland Abuaf.
\newblock Compact hyper-{K}\"ahler categories {I} : {T}heory.
\newblock \textit{preprint}.

\bibitem[Abub]{abuaf-HK2}
Roland Abuaf.
\newblock Compact hyper-{K}\"ahler categories {II} : {E}xamples and
  applications.
\newblock \textit{In preparation}.

\bibitem[BO02]{BO}
A.~Bondal and D.~Orlov.
\newblock Derived categories of coherent sheaves.
\newblock In {\em Proceedings of the {I}nternational {C}ongress of
  {M}athematicians, {V}ol. {II} ({B}eijing, 2002)}, pages 47--56. Higher Ed.
  Press, Beijing, 2002.

\bibitem[Bri02]{bri}
Tom Bridgeland.
\newblock Flops and derived categories.
\newblock {\em Invent. Math.}, 147(3):613--632, 2002.

\bibitem[Cau12]{cautis-flop}
Sabin Cautis.
\newblock Flops and about: a guide.
\newblock In {\em Derived categories in algebraic geometry}, EMS Ser. Congr.
  Rep., pages 61--101. Eur. Math. Soc., Z\"urich, 2012.

\bibitem[Che02]{chen-flop}
Jiun-Cheng Chen.
\newblock Flops and equivalences of derived categories for threefolds with only
  terminal {G}orenstein singularities.
\newblock {\em J. Differential Geom.}, 61(2):227--261, 2002.

\bibitem[HNW11]{Huy-Nieper}
Daniel Huybrechts and Marc Nieper-Wisskirchen.
\newblock Remarks on derived equivalences of {R}icci-flat manifolds.
\newblock {\em Math. Z.}, 267(3-4):939--963, 2011.

\bibitem[Ina02]{Inaba}
Michi-aki Inaba.
\newblock Toward a definition of moduli of complexes of coherent sheaves on a
  projective scheme.
\newblock {\em J. Math. Kyoto Univ.}, 42(2):317--329, 2002.

\bibitem[Kaw06]{kawa-mukai}
Yujiro Kawamata.
\newblock Derived equivalence for stratified {M}ukai flop on {$G(2,4)$}.
\newblock In {\em Mirror symmetry. {V}}, volume~38 of {\em AMS/IP Stud. Adv.
  Math.}, pages 285--294. Amer. Math. Soc., Providence, RI, 2006.

\bibitem[Kel04]{keller2}
Bernhard Keller.
\newblock Hochschild cohomology and derived {P}icard groups.
\newblock {\em J. Pure Appl. Algebra}, 190(1-3):177--196, 2004.

\bibitem[Mar09]{markarian}
Nikita Markarian.
\newblock The {A}tiyah class, {H}ochschild cohomology and the {R}iemann-{R}och
  theorem.
\newblock {\em J. Lond. Math. Soc. (2)}, 79(1):129--143, 2009.

\bibitem[Muk81]{mukai-abelian}
Shigeru Mukai.
\newblock Duality between {$D(X)$} and {$D(\hat X)$} with its application to
  {P}icard sheaves.
\newblock {\em Nagoya Math. J.}, 81:153--175, 1981.

\bibitem[Muk99]{mukai-K3}
Shigeru Mukai.
\newblock Duality of polarized {$K3$} surfaces.
\newblock In {\em New trends in algebraic geometry ({W}arwick, 1996)}, volume
  264 of {\em London Math. Soc. Lecture Note Ser.}, pages 311--326. Cambridge
  Univ. Press, Cambridge, 1999.

\bibitem[Nam03]{nami-mukai1}
Yoshinori Namikawa.
\newblock Mukai flops and derived categories.
\newblock {\em J. Reine Angew. Math.}, 560:65--76, 2003.

\bibitem[Nam04]{nami-mukai2}
Yoshinori Namikawa.
\newblock Mukai flops and derived categories. {II}.
\newblock In {\em Algebraic structures and moduli spaces}, volume~38 of {\em
  CRM Proc. Lecture Notes}, pages 149--175. Amer. Math. Soc., Providence, RI,
  2004.

\bibitem[Orl03]{orlov2}
D.~O. Orlov.
\newblock Derived categories of coherent sheaves and equivalences between them.
\newblock {\em Uspekhi Mat. Nauk}, 58(3(351)):89--172, 2003.

\bibitem[Orl05]{orlov3}
D.~O. Orlov.
\newblock Derived categories of coherent sheaves, and motives.
\newblock {\em Uspekhi Mat. Nauk}, 60(6(366)):231--232, 2005.

\bibitem[PS11]{popa-schnell}
Mihnea Popa and Christian Schnell.
\newblock Derived invariance of the number of holomorphic 1-forms and vector
  fields.
\newblock {\em Ann. Sci. \'Ec. Norm. Sup\'er. (4)}, 44(3):527--536, 2011.

\bibitem[Rou11]{Rouquier}
Rapha{\"e}l Rouquier.
\newblock Automorphismes, graduations et cat\'egories triangul\'ees.
\newblock {\em J. Inst. Math. Jussieu}, 10(3):713--751, 2011.

\bibitem[To{\"e}14]{toen3}
Bertrand To{\"e}n.
\newblock Derived algebraic geometry.
\newblock {\em EMS Surv. Math. Sci.}, 1(2):153--240, 2014.

\bibitem[TV07]{TV}
Bertrand To{\"e}n and Michel Vaqui{\'e}.
\newblock Moduli of objects in dg-categories.
\newblock {\em Ann. Sci. \'Ecole Norm. Sup. (4)}, 40(3):387--444, 2007.

\end{thebibliography}
\end{document}